\documentclass[a4paper, notitlepage, oneside, reqno]{amsart}
\usepackage{amsmath,amsfonts,amssymb}
\usepackage{amsthm} 
\usepackage{graphicx} 
\usepackage{subfig}
\usepackage{listings}
\usepackage{geometry}
\usepackage{hyperref}
\usepackage{wrapfig}
\usepackage{url}
\usepackage{tikz}
\usetikzlibrary{shapes, positioning, arrows}
\usepackage{eso-pic}
\usepackage{mathrsfs}
\usepackage{xcolor}
\usepackage{mathtools}

\DeclarePairedDelimiter\floor{\lfloor}{\rfloor}

\newcommand{\bigO}{\mathcal O}
\newcommand{\diff}{{\mathrm D}}

\newcommand{\dx}{\, \mathrm{d}x}
\newcommand{\dy}{\, \mathrm{d}y}

\newcommand{\R}{\mathbb{R}}
\newcommand{\Z}{\mathbb{Z}}
\newcommand{\N}{\mathbb{N}}

\newcommand{\Si}{\mathbb{S}}

\DeclareMathOperator{\F}{{\mathscr F}}

\DeclareMathOperator{\id}{id}

%%% ENVIRONMENTS %%%
\newtheorem{theorem}{Theorem}[section]
\newtheorem{lemma}[theorem]{Lemma}

\newtheorem{proposition}[theorem]{Proposition}

\numberwithin{equation}{section}
\theoremstyle{remark}
\newtheorem{remark}[theorem]{Remark}

\title[Waves of maximal height for the Degasperis-Procesi equation]{A non-local approach to waves of maximal height for the Degasperis-Procesi equation}
\author{Mathias Nikolai Arnesen}
\date{\today}

\begin{document}

\begin{abstract}
We consider the non-local formulation of the Degasperis-Procesi equation $u_t+uu_x+L(\frac{3}{2}u^2)_x=0$, where $L$ is the non-local Fourier multiplier operator with symbol $m(\xi)=(1+\xi^2)^{-1}$. We show that all $L^\infty$, pointwise travelling-wave solutions are bounded above by the wave-speed and that if the maximal height is achieved they are peaked at those points, otherwise they are smooth. For sufficiently small periods we find the highest, peaked, travelling-wave solution as the limiting case at the end of the main bifurcation curve of $P$-periodic solutions. The results imply that the Degasperis-Procesi equation does not admit cuspon solutions.
\end{abstract}

\maketitle

\section{Introduction}
We consider the equation
\begin{equation}
\label{eq: DP non-local}
u_t+uu_x+(L(\frac{3}{2}u^2))_x =0, \quad x\in \R, \,\, t\in \R,
\end{equation}
where $u$ is a scalar function and $L$ is the nonlocal operator $L=(1-\partial_x^2)^{-1}$. That is,
\begin{equation*}
Lf=K\ast f, \quad K=\F^{-1}m,
\end{equation*}
where $m(\xi)=(1+\xi^2)^{-1}$ and $\F$ denotes the Fourier transform. Equation \eqref{eq: DP non-local} is the nonlocal formulation of the Degasperis--Procesi equation \cite{Degasperis1999ai}
\begin{equation}
\label{eq: DP local}
u_t-u_{xxt}+4u u_x-3u_x u_{xx}-u u_{xxx}=0,
\end{equation}
as can easily be seen by applying the inverse operator of $L$, $1-\partial_x^2$, to \eqref{eq: DP non-local}. The Degasperis--Procesi equation was discovered as one of three equations within a certain class of third order PDEs satisfying an asymptotic integrability condition up to third order, the other two being the KdV and the Camassa--Holm equations \cite{Degasperis1999ai}. Like these two equations, the Degasperis--Procesi equation has a lax pair, a bi-Hamiltonian structure, and an infinite number of conservation laws \cite{Degasperis2002ani}. While it was discovered purely for its mathematical properties, it has later been rigorously derived as a model for the propagation of shallow water waves, having the same asymptotic accuracy as the Camassa--Holm equation \cite{Constantin2009thr}. The Degasperis--Procesi and Camassa--Holm equations feature stronger nonlinear effects than the KdV equation (or rather, the dispersion is much weaker), making them better suited to modelling nonlinear phenomena like wave breaking and solutions with singularities, while maintaining the rich mathematical structure mentioned above that other weakly-dispersive models like the Whitham equation \cite{Whitham1967vma} lack. 

Shortly after its discovery, the well-posedness of \eqref{eq: DP non-local} was extensively studied, establishing that it is locally well-posed in $H^s$ both on $\R$ and $\Si$ for $s>3/2$, and admitting both global classical and weak solutions and classical solutions that blow up in finite time \cite{Yin2003otc}, \cite{Yin2003gef}, \cite{Yin2004gws}. Moreover, the blow-up only occurs as wave-breaking. That is, the solution remains bounded, but it's slope goes to $- \infty$; for a detailed study of the blow-up for \eqref{eq: DP non-local}, see \cite{Escher2006gws} and references therein. 

The weak dispersion allows not only for wave-breaking, but also for waves with singularities in the form of sharp crests at the wave-peaks. Indeed, explicit peaked soliton solutions, as well as multipeakon solutions which are not travelling waves, to \eqref{eq: DP local} are known \cite{Degasperis2002ani}. These are of the same form as the ones for Camassa-Holm equation \cite{Camassa1993ais}, and indeed every equation in the so-called 'b-family' of equations that the Degasperis-Procesi and Camassa-Holm equations belong to has such solutions \cite{Degasperis2002ani}.

In this paper we will focus on travelling-wave solutions to \eqref{eq: DP non-local}. Assuming $u(x,t)=\varphi(x-\mu t)$ is a travelling wave, where $\mu\in \R$ is the wave-speed, \eqref{eq: DP non-local} takes the form
\begin{equation}
\label{eq: DP}
-\mu \varphi+\frac{1}{2}\varphi^2+\frac{3}{2}L(\varphi^2)=a,
\end{equation}
where $a\in \R$ is a constant of integration. By a Galilean change of variables this is equivalent to $-\mu\varphi +\frac{1}{2}\varphi^2+\frac{3}{2}L(\varphi^2+k\varphi)=0$, where $k$ depends on $\mu$ and $a$; in particular, $k\neq 0$ for $a\neq 0$. Hence there is no Galilean change of variables that removes $a$ while preserving the form of the equation. We will work with the equation in the form \eqref{eq: DP}.

From the structure of the equation it is readily deducible that all non-constant solutions to \eqref{eq: DP} are smooth except potentially at points where the wave-height equals the wave-speed (cf. Theorem \ref{thm: regularity I} or \cite{Lenells2005tws}) and singularities can only occur in the form of sharp crests with height equal to the wave-speed. We therefore call such solutions for waves of \emph{maximal height}. In this paper we will study the regularity and existence of travelling waves of maximal height to \eqref{eq: DP} from a nonlocal perspective.

The motivation of this paper is two-fold: to provide novel information about waves of maximal height for the DP equation specifically and to better understand the formation of highest waves and their singularities for nonlinear dispersive equations more generally. We therefore consider the non-local formulation and follow the general framework of \cite{Ehrnstrom2016owc} and \cite{Ehrnstrom2016eoa}. We show firstly that any even, non-constant $L^\infty$ solution of \eqref{eq: DP} is peaked wherever the maximal height is achieved. That is, it is Lipschitz continuous at the crest(s), but not $C^1$. In particular this means that there are no cuspon solutions of \eqref{eq: DP} in $L^\infty$. The restriction to bounded to solutions is quite natural as while equation \eqref{eq: DP} makes sense for any $\varphi\in H^{-2}(\R)$, if we exclude purely distributional solutions, any function solving \eqref{eq: DP} a.e. clearly belongs to $L^\infty$. Secondly, for sufficiently small periods peaked solutions of \eqref{eq: DP} are found as the limiting case at the end of the main bifurcation curve of $C_{\text{even}}^\alpha(\Si_P)$ solutions for $\alpha\in (1,2)$. While it has been established that there are peaked periodic travelling-wave solutions to \eqref{eq: DP local} for all non-zero wave speeds in \cite{Lenells2005tws}, the approach of that paper works only for the local formulation and cannot be extended to a genuinely non-local equation. Moreover the methods \cite{Lenells2005tws} and that used here are entirely different and give different insight and information.

As $L^\infty$ cuspon solutions to \eqref{eq: DP local} have been claimed by several authors, our claim that they do not exists requires some comment. The cuspons are invariably found studying the local equation, as they cannot appear in the non-local formulation as we show in this paper, and they are strong solutions in all points except the cusps. The exclusion of the cusps is crucial, however. Consider for instance the stationary cusped soliton $u(x)=\sqrt{1-\mathrm{e}^{-2|x|}}$ discovered in \cite{Zhang2007cas}, which is a pointwise solution to \eqref{eq: DP local} at all points except $0$, where the function has a cusp. For any test function $\varphi\in C_0^\infty(\R)$, treating the left-hand side of \eqref{eq: DP local} as a distribution (note that $u$ is independent of time), one can with basic calculus show that
\begin{equation*}
\langle 4u u_x-3u_x u_{xx}-u u_{xxx}, \varphi\rangle =\langle u^2, \frac{1}{2}\varphi_{xxx}-2\varphi_x\rangle=\int_\R u^2\left(\frac{1}{2}\varphi_{xxx}-2\varphi_x\right)\dx=2\varphi_x(0)
\end{equation*}
and hence it is not a weak solution to \eqref{eq: DP local}, but rather to
\begin{equation*}
u_t-u_{xxt}+4u u_x-3u_x u_{xx}-u u_{xxx}=-2\delta',
\end{equation*}
where $\delta$ is the usual delta-distribution. This is the case with all cuspons of the DP equation - there are point mass distributions at the cusps. To accept any function that solves the equation pointwise at all but a countable number of points as a solution is equivalent to claiming that the sawtooth function $u(x)=x-\text{floor}(x)$, or indeed any piece-wise linear function, is a solution to the equation
\begin{equation*}
u''(x)=0, \quad x\in \R,
\end{equation*}
which is clearly absurd. Hence we think it more correct to call the cuspons solutions not of \eqref{eq: DP local} with $0$ right-hand side, but with some point mass distributions.

%In fact, following the aforementioned solution concept of smooth, pointwise solutions at all but a countable number of points, there is no a priori mathematical justification for choosing these points to be where $\varphi=\mu$. One could select any countable collection of points and get a situation where if two (local) travelling waves with the same speed intersect at a point, one can go from one to another in that point to form a new travelling wave still satisfying the solution concept. In this way, one can create entirely absurd solutions. Going to the non-local formulation the "cuspons" remain solutions if the point mass distributions are taken into account, but in this setting the absurdity becomes readily apparent as the point mass is no longer hidden away at a single point. 

The paper is structured as follows: first some essential properties of the operator $L$ and its kernel $K$ are recounted in Section \ref{sec: L and K}. In Section \ref{sec: D-P} we establish some general results about solutions to \eqref{eq: DP} and, in particular, using the properties of $K$, study the behaviour around points of critical height and prove Theorem \ref{thm: regularity II}, stating that any even, nonconstant solution is peaked at points where $\varphi=\mu$. Lastly, in Section \ref{sec: GB} we use the bifurcation Theory of \cite{Buffoni2003ato} to construct a global bifurcation curve of even, periodic solutions in $C^\alpha$ for $\alpha\in (1,2)$. Using the properties of solutions established in Section \ref{sec: D-P}, we show that for sufficiently small periods the solutions along the curve converge to an even, non-constant solution that achieves the maximal height and must therefore be a peakon.

\section{The operator \(L\) and its kernel}
\label{sec: L and K}
As $\widehat{L f}(\xi)=(1+\xi^2)^{-1}\widehat{f}(\xi)$, $Lf$ can formally be expressed as a convolution
\begin{equation*}
Lf(x)=K\ast f (x)=\int_\R K(x-y)f(y)\dy,
\end{equation*}
where $K(x)$ is the inverse Fourier transform of $m(\xi)$. In this case, an explicit expression is well known from virtually any textbook on Fourier analysis:
\begin{equation}
\label{eq: K}
K(x)=\F^{-1}((1+\xi^2)^{-1})=\frac{1}{2}\mathrm{e}^{-|x|}. 
\end{equation}
In particular, we note that $K$ is completely monotone on $(0,\infty)$; it is positive, strictly decreasing and strictly convex for $x>0$.

The periodic kernel is
\begin{equation*}
K_P(x)=\sum_{n\in \Z}K(x+nP),
\end{equation*}
for $P\in (0,\infty)$. For $x\in (-P/2, P/2)$, $K(x+nP)=\frac{1}{2}\mathrm{e}^{-|x+nP|}=\frac{1}{2}\mathrm{e}^{-x}\mathrm{e}^{-nP}$ for $n\geq 1$, and $K(x+nP)=\frac{1}{2}\mathrm{e}^{x}\mathrm{e}^{nP}$ for $n\leq -1$. Thus
\begin{align}
K_P(x) & =\sum_{n\in \Z}K(x+nP) \nonumber \\
&=\frac{1}{2}\mathrm{e}^{-|x|}+\frac{1}{2}(\mathrm{e}^x+\mathrm{e}^{-x})\sum_{n=1}^\infty \mathrm{e}^{-nP} \nonumber \\
& =\frac{1}{2}\mathrm{e}^{-|x|}+\cosh(x)\frac{1}{\mathrm{e}^{P}-1}. \label{eq: K_P}
\end{align}
For periodic functions, the operator $L$ is given by $Lf(x)=\int_{-P/2}^{P/2}K_P(x-y)f(y)\dy$.

We conclude this section with a rather obvious, but crucial lemma:
\begin{lemma}
\label{lem: monotone}
$L$ is strictly monotone: $Lf>Lg$ if $f$ and $g$ are bounded and continuous functions with $f\gneq g$.
\end{lemma}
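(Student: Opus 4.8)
The plan is to prove strict monotonicity of $L$ directly from the positivity of the kernel, splitting into the line case and the periodic case, which only differ in which explicit kernel formula is invoked. In either case the key point is that the kernel is \emph{strictly positive} everywhere: on $\R$ we have $K(x)=\tfrac12\mathrm{e}^{-|x|}>0$ by \eqref{eq: K}, and on $\Si_P$ we have $K_P(x)=\tfrac12\mathrm{e}^{-|x|}+\cosh(x)(\mathrm{e}^P-1)^{-1}>0$ by \eqref{eq: K_P}. Writing $h=f-g$, the hypothesis $f\gneq g$ means $h\geq 0$ with $h$ bounded, continuous, and not identically zero; by continuity there is a point $x_0$ and a radius $\delta>0$ with $h>c>0$ on the interval $(x_0-\delta,x_0+\delta)$ (in the periodic case, an arc of that length).

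Next I would estimate, for an arbitrary point $x$,
\begin{equation*}
Lf(x)-Lg(x)=L h(x)=\int K(x-y)h(y)\dy \geq \int_{x_0-\delta}^{x_0+\delta} K(x-y)h(y)\dy \geq c\int_{x_0-\delta}^{x_0+\delta} K(x-y)\dy,
\end{equation*}
where the first inequality uses $K\geq 0$ and $h\geq 0$ to discard the rest of the integral, and the second uses $h>c$ on the chosen interval. The remaining integral $\int_{x_0-\delta}^{x_0+\delta}K(x-y)\dy$ is the integral of a strictly positive continuous function over a set of positive measure, hence strictly positive, and since $x$ was arbitrary this gives $Lh(x)>0$ for every $x$, i.e. $Lf>Lg$ pointwise everywhere. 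In the periodic case the same computation runs with $K_P$ in place of $K$ and the integral taken over the corresponding arc of $\Si_P$; one should note $K_P$ inherits strict positivity and continuity from the formula above.

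Since the kernel is manifestly positive and continuous, there is essentially no obstacle here; the only point requiring a word of care is the reduction step — extracting from $f\gneq g$ a genuine subinterval on which $h=f-g$ is bounded below by a positive constant — which is exactly where continuity of $f$ and $g$ is used (a merely nonnegative, a.e.-nonzero $h$ would not suffice, since the integral against a positive kernel could still vanish nowhere but this uniform-on-an-interval strengthening is what makes the argument clean and uniform in $x$). I would state the lemma's proof for the line and remark that the periodic case is identical with $K_P$ replacing $K$, both being positive by \eqref{eq: K} and \eqref{eq: K_P} respectively.
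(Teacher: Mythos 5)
Your proof is correct and uses exactly the same argument as the paper: strict positivity of the kernel $K$ (resp.\ $K_P$) combined with continuity of $f-g$ to find a set of positive measure where the integrand is bounded below. Your version just spells out the extraction of a subinterval with $f-g>c>0$, which the paper leaves implicit.
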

\begin{proof}
Let $f$ and $g$ be as in the statement of the lemma. As $K$ is strictly positive, we get that for all $x\in \R$, $K(x-\cdot)(f-g)\gneq 0$ and by continuity strictly positive on a set of non-zero measure. Hence
\begin{equation*}
Lf(x)-Lg(x)= \int_\R K(x-y)(f(y)-g(y))\dy>0.
\end{equation*}
Clearly, the same argument holds for $K_p$.
\end{proof}

\section{Periodic travelling waves}
\label{sec: D-P}

Note that if $\varphi(x)$ is a travelling wave solution to \eqref{eq: DP non-local} with wave-speed $\mu$, then $-\varphi(-x)$ is also a travelling solution to \eqref{eq: DP non-local} with wave-speed $-\mu$. We will therefore only consider $\mu>0$.

First we investigate how the parameter $a$ in \eqref{eq: DP} influences the behaviour/existence of solutions.
\begin{theorem}
\label{thm: solutions to DP}
Fix $\mu>0$ and $P<\infty$. For all values of $a\in \R$, non-constant $P$-periodic solutions to \eqref{eq: DP} (if they exist) satisfy
\begin{equation*}
\min \varphi  <\frac{\mu+\sqrt{\mu^2+8a}}{4} <\max\varphi .
\end{equation*}
Moreover,
\begin{itemize}
\item[(i)] For $a\leq 0$, all solutions are non-negative. When $a<-\frac{\mu^2}{8}$ there are no real solutions and for $a=-\frac{\mu^2}{8}$ there is only the constant solution $\varphi=\frac{\mu}{4}$,
\item[(ii)] there are only constant solutions when $a\geq \mu^2$.
\end{itemize}
\end{theorem}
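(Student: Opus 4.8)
The plan is to read information off \eqref{eq: DP} at the points where $\varphi^2$ attains its extrema, exploiting the strict monotonicity of $L$ from Lemma~\ref{lem: monotone}, and to supplement this with the local form of the equation where the sign of $\varphi$ makes the nonlocal argument inconclusive.

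First I would rewrite \eqref{eq: DP}: multiplying by $2$ and completing the square gives $(\varphi-\mu)^2 + 3L(\varphi^2) = \mu^2+2a$, and eliminating $L(\varphi^2)$ this is equivalent to $Q(\varphi) = \tfrac32\bigl(\varphi^2 - L(\varphi^2)\bigr)$, where $Q(t) := 2t^2 - \mu t - a$ has roots $t_\pm = \tfrac14\bigl(\mu \pm \sqrt{\mu^2+8a}\,\bigr)$ when $\mu^2+8a\ge 0$. Since $\varphi$ is a non-constant continuous function (cf.\ Theorem~\ref{thm: regularity I}), so is $\varphi^2$; let $\overline x$ and $\underline x$ be points where $\varphi^2$ is, respectively, largest and smallest. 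Lemma~\ref{lem: monotone} gives $L(\varphi^2)(\overline x) < \varphi(\overline x)^2$ and $L(\varphi^2)(\underline x) > \varphi(\underline x)^2$, whence $Q(\varphi(\overline x)) > 0$ and $Q(\varphi(\underline x)) < 0$. In particular $Q$ takes a negative value, forcing $\mu^2+8a>0$; so for $\mu^2+8a\le 0$ there are no non-constant solutions, and since a constant $c$ must solve $Q(c)=0$ there are none at all when $a<-\mu^2/8$ and only $\varphi\equiv\mu/4$ when $a=-\mu^2/8$ (as then $Q = 2(t-\mu/4)^2$). This settles the existence statements in (i).

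For the estimate, $Q(\varphi(\underline x))<0$ gives $\varphi(\underline x)\in(t_-,t_+)$, so $\min\varphi \le \varphi(\underline x) < t_+$ and $\max\varphi \ge \varphi(\underline x) > t_-$. Now $\varphi(\overline x)$ equals $\max\varphi$ if $|\max\varphi|\ge|\min\varphi|$ and $\min\varphi$ otherwise. In the first case $Q(\max\varphi)>0$ places $\max\varphi$ outside $[t_-,t_+]$, and since $\max\varphi > t_-$ we get $\max\varphi > t_+$, which together with $\min\varphi<t_+$ is the assertion. The remaining ``negative-dominant'' case $|\min\varphi|>|\max\varphi|$ — in which necessarily $\min\varphi<0$, $\varphi^2$ is largest exactly at a minimum point $x_*$ of $\varphi$, and the nonlocal estimate only yields $Q(\min\varphi)>0$ — I would exclude using the local equation: applying $1-\partial_x^2$ to \eqref{eq: DP} and using $(1-\partial_x^2)L=\id$ gives $(\mu-\varphi)\varphi'' = (\varphi')^2 - Q(\varphi)$ wherever $\varphi$ is $C^2$. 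Since $\min\varphi<0<\mu$, the solution is smooth near $x_*$ (the regularity being readily deducible from \eqref{eq: DP}, cf.\ Theorem~\ref{thm: regularity I} or \cite{Lenells2005tws}), and there $\varphi'(x_*)=0$, $\varphi''(x_*)\ge 0$, $\mu-\varphi(x_*)>0$, so $Q(\min\varphi)\le 0$ — contradicting $Q(\min\varphi)>0$. Hence this case cannot occur and $\min\varphi < t_+ < \max\varphi$ always.

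For the sign statement in (i): if $a\le 0$ and $\min\varphi<0$, then at a minimum point the left-hand side of \eqref{eq: DP} equals $-\mu\min\varphi + \tfrac12(\min\varphi)^2 + \tfrac32 L(\varphi^2)$, which is $>0\ge a$ because $\min\varphi<0<\mu$ and $L(\varphi^2)\ge 0$; so $\varphi\ge 0$. For (ii), suppose $a\ge\mu^2$ and $\varphi$ is non-constant; then $t_+\ge\mu$, so the estimate just proved forces $\max\varphi>\mu$, hence $\varphi$ is smooth at its maximum and the local equation gives $0\le(\mu-\max\varphi)\varphi''(x_{\max}) = -Q(\max\varphi)$, i.e.\ $\max\varphi\le t_+$, contradicting $\max\varphi>t_+$; thus only the constant solutions $\varphi\equiv t_\pm$ survive. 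The main obstacle is the sign bookkeeping in the third paragraph: excluding ``mostly negative'' solutions does not seem to follow from purely nonlocal manipulations (iterating the $L$-estimates, or integrating \eqref{eq: DP} over a period, stays consistent), and I expect one genuinely has to pass to the local formulation together with the a priori smoothness of solutions away from $\{\varphi=\mu\}$; the same local input is what makes (ii) work.
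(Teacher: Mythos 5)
Your proof is correct and takes a genuinely different route from the paper's. The paper stays nonlocal throughout: at a minimum $x_0$ of a partially negative $\varphi$ it uses $\varphi'(x_0)=0$ together with \eqref{eq: rewritten DP} to locate the minimum of $L(\varphi^2)$, and for $a\geq\mu^2$ it derives a contradiction from the impossible placement of the extrema of $L(\varphi^2)$ relative to those of $\varphi^2$ (using the complete monotonicity of the kernel); some of these steps are rather terse. You instead pass to the local second-order identity $(\mu-\varphi)\varphi''=(\varphi')^2-Q(\varphi)$ and evaluate it at the extrema of $\varphi$, which makes the quadratic $Q$ and its roots $t_\pm$ do all the sign bookkeeping in a transparent, uniform way for both the ``negative-dominant'' case and for (ii). What this buys is a cleaner and more mechanical argument once the a priori regularity is in hand; what it costs is exactly that regularity, and there is one spot where you lean on it beyond what the cited theorem provides: in (ii) you invoke Theorem~\ref{thm: regularity I} for smoothness at the maximum, where $\max\varphi>t_+\geq\mu$, but that theorem as stated gives smoothness only on sets where $\varphi<\mu$. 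The bootstrap does adapt---whenever $|\varphi-\mu|$ is locally bounded away from zero one has $\varphi=\mu\pm\sqrt{\mu^2+2a-3L(\varphi^2)}$ with a fixed sign, and the same iteration applies---but you should say this explicitly rather than cite the theorem as written. Two further small points worth recording: that $\varphi$ non-constant, continuous and periodic forces $\varphi^2$ non-constant (which is what Lemma~\ref{lem: monotone} needs to give the strict inequalities $Q(\varphi(\overline x))>0>Q(\varphi(\underline x))$), and that your closing suspicion that the ``mostly negative'' case requires passing to the local form and a priori smoothness matches what the paper does as well---though the paper only uses the first-derivative identity, not the second.
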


\begin{proof}
At any point $x$ where $\varphi(x)^2=L(\varphi^2)(x)=:R$, \eqref{eq: DP} reduces to
\begin{equation*}
R(2R-\mu)= a, 
\end{equation*}
which has the positive solution $R=\frac{\mu+\sqrt{\mu^2+8a}}{4}$. As $L(c)=c$ for constants and $L$ is strictly monotone (Lemma \ref{lem: monotone}), there has to exist points where $\varphi^2<L(\varphi^2)$ and points where $\varphi^2>L(\varphi^2)$ for non-constant $P$-periodic solutions $\varphi$. Thus the first inequality has to hold if $\max\varphi >|\min \varphi  |$. 

Consider first the case $a\leq 0$. Then $\varphi$ cannot be negative in any point as then the left-hand side of \eqref{eq: DP} would be strictly positive in that point ($Lf$ is non-negative if $f$ is non-negative). Let $m=\min \varphi\geq 0$. Then $L(\varphi^2)\geq m^2$ with equality if and only if $\varphi\equiv m$. Hence, if $\varphi$ is a solution to \eqref{eq: DP}, we get
\begin{equation*}
m(2m-\mu)\leq a.
\end{equation*}
For $a<-\frac{\mu^2}{8}$ this has no real solutions, and for $a=-\frac{\mu^2}{8}$ this has only the constant solution $\varphi=\frac{\mu}{4}$. This proves (i).

Now let $a>0$. Assume that $\varphi<0$ on some intervals. By Theorem \ref{thm: regularity I}, $\varphi$ is smooth on these intervals. Clearly, $\varphi$ is bounded below, so there is a point $x_0$ such that $\varphi(x_0)=\min  \varphi $. Then $L(\varphi \varphi')(x_0)=0$ and $L(\varphi^2)$ attains it minimum at $x_0$. This implies that $\varphi$ also has to be positive at some point, and $M:=\max \varphi> |\min \varphi  |$. Thus the first inequality holds and $M>\frac{\mu+\sqrt{\mu^2+8a}}{4}$. In particular, this means that $\max  \varphi \geq \frac{\mu}{2}$ for all $a\geq 0$ and $M>\sqrt{a}$ if $a<\mu^2$. We have that
\begin{equation}
\label{eq: rewritten DP}
(\varphi-\mu)^2=\mu^2+2a-3L(\varphi^2).
\end{equation}
Assume $a\geq \mu^2$. Note that if $\varphi=\mu$ at any point, then $3L(\varphi^2)=\mu^2+2a\geq 3\mu^2$ at those points. If $a=\mu^2$, then the constant solution $\varphi\equiv \mu$ is a valid solution, otherwise Lemma \ref{lem: monotone} implies that $\varphi$ must also take values above $\mu$. Assume $\varphi \gneq \mu$ is a non-constant solution. Then the left-hand side of \eqref{eq: rewritten DP} attains its minimum where $\varphi$ is attains its minimum, while the right-hand side attains its minimum where $L(\varphi^2)$ attains its maximum. This is a contradiction. As both $K$ and $K_P$ are even and completely monotone on $(0,\infty)$ and $(0,P/2)$, respectively, $L(\varphi^2)$ cannot be maximal where $\varphi^2$ is minimal.

Assume now that $\varphi$ takes values both above and below $\mu$. Then $L(\varphi^2)(x)$ is maximal whenever $\varphi(x)=\mu$ and $3L(\varphi^2)(x)=\mu^2+2a$ these points. Moreover, $3L(\varphi^2)<\mu^2+2a$ when $\varphi>\mu$. This implies that there are infinitely many disjoint intervals, each of finite length, where $\varphi>\mu$, and that $L(\varphi^2)$ has its minimum on each interval at the points where $\varphi$ is maximal. This is again not possible.
\end{proof}

Henceforth we will assume that $a$ is such that non-constant solutions exists, i.e. that $-\mu^2/8<a<\mu^2$.
\begin{theorem}
\label{thm: strictly increasing}
Let $P(0,\infty]$. Any $P$-periodic, non-constant and even solution $\varphi\in BUC^1(\R)$ that is non-decreasing on $(-P/2,0)$ satisfies
\begin{equation*}
\varphi'>0, \,\, \varphi<\mu \,\, \text{on} \,\, (-P/2,0).
\end{equation*}
If $\varphi\in BUC^2(\R)$, then
\begin{equation*}
\varphi''(0)<0, \quad \text{and} \quad \varphi(0)<\mu,
\end{equation*}
and if $P<\infty$, then
\begin{equation*}
\varphi''(\pm P/2)>0.
\end{equation*}
\end{theorem}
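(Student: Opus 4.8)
The plan is to pass from the integro‑differential equation \eqref{eq: DP} to a scalar second‑order ODE on the open set $\{\varphi\neq\mu\}$, to extract from it an explicit first integral, and to combine this with Theorem \ref{thm: solutions to DP} and the regularity statement Theorem \ref{thm: regularity I}. First I would record the elementary consequences of the hypotheses: evenness together with monotonicity on $(-P/2,0)$ force the global maximum of $\varphi$ to be at $0$ and the global minimum at $\pm P/2$, with $\varphi'(0)=0$ and, when $P<\infty$, $\varphi'(\pm P/2)=0$; and by Theorem \ref{thm: solutions to DP} together with $a<\mu^2$ one has $\min\varphi<\mu$. Where $\varphi\neq\mu$ the solution is smooth by Theorem \ref{thm: regularity I}, and applying $1-\partial_x^2$ to \eqref{eq: DP} there produces the ODE
\[
(\mu-\varphi)\varphi''=(\varphi')^2+a+\mu\varphi-2\varphi^2 .
\]
A direct differentiation shows that along its solutions the quantity $(\varphi')^2(\mu-\varphi)^2-(\varphi^2-a)(\mu-\varphi)^2$ is constant on each maximal interval of $\{\varphi\neq\mu\}$; moreover, if the closure of such an interval contains a point at which $\varphi=\mu$ then, $\varphi'$ being bounded since $\varphi\in BUC^1$, letting $x$ tend to that point forces the constant to vanish, i.e.\ $(\varphi')^2=\varphi^2-a$ there. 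Finally, applying $1-\partial_x^2$ to \eqref{eq: DP} also shows that $\varphi$ cannot be constant on any subinterval except with value a root of $2s^2-\mu s-a$, both roots being strictly smaller than $\mu$.

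\emph{Step 1: $\varphi<\mu$ everywhere, in particular on $(-P/2,0)$.} Suppose for contradiction $\varphi(0)=\max\varphi\ge\mu$. If $\varphi(0)>\mu$, let $J=(x_1,-x_1)$ be the component of $\{\varphi>\mu\}$ containing $0$; since $\varphi(\pm P/2)=\min\varphi<\mu$ for $P<\infty$ (and for $P=\infty$ a direct estimate using $\int_\R K=1$ excludes $\varphi>\mu$ everywhere), $J$ is a proper symmetric interval with $\varphi(\pm x_1)=\mu$. On $J$ the ODE holds, its first integral is pinned to $(\varphi')^2=\varphi^2-a$ by the touching at $x_1$, and evaluating at $0$ with $\varphi'(0)=0$ gives $\varphi(0)^2=a$, impossible since $\varphi(0)>\mu>0$ and $a<\mu^2$. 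If $\varphi(0)=\mu$, then $\varphi<\mu$ on $(-P/2,0)$ (it cannot equal $\mu$ on a subinterval), the first integral on $(-P/2,0)$ is again pinned to $(\varphi')^2=\varphi^2-a$ by the touching at $0$, and letting $x\to0^-$ yields $(\varphi'(x))^2\to\mu^2-a>0$ while $\varphi'(x)\to\varphi'(0)=0$, a contradiction. Hence $\varphi(0)<\mu$, and since the maximum is at $0$, $\varphi<\mu$ throughout.

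\emph{Step 2: the derivative statements.} Since $\varphi<\mu$ everywhere, $\varphi$ is smooth and solves the ODE globally; on $\{\varphi<\mu\}$ this has $C^\infty$ right‑hand side, so the associated first‑order system has local uniqueness. Hence a non‑constant solution cannot have $\varphi'$ and $\varphi''$ vanish at the same point: at a critical point $\varphi''=0$ would, by the ODE, force $\varphi$ to equal a root of $2s^2-\mu s-a$, i.e.\ to sit at an equilibrium, whence $\varphi$ would be constant. Now if $\varphi'(x_0)=0$ at some $x_0\in(-P/2,0)$, then $\varphi'\ge0$ there makes $x_0$ an interior minimum of $\varphi'$, so $\varphi''(x_0)=0$, a contradiction; thus $\varphi'>0$ on $(-P/2,0)$. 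At $0$ we have $\varphi'(0)=0$ with $0$ a maximum, so $\varphi''(0)\le0$, and $\varphi''(0)\neq0$ gives $\varphi''(0)<0$; $\varphi(0)<\mu$ was obtained in Step 1. When $P<\infty$, the same argument at the minima $\pm P/2$ (where $\varphi'(\pm P/2)=0$ and $\varphi''\ge0$) gives $\varphi''(\pm P/2)>0$.

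The decisive point, and where the main difficulty lies, is the identification and correct normalisation of the first integral: the relation $(\varphi')^2=\varphi^2-a$ is \emph{forced} on the pieces of $\{\varphi\neq\mu\}$ abutting a putative crest of height $\mu$, and it is this that makes Step 1 go through without any delicate analysis of the kernel $K_P$. The rest is essentially bookkeeping: justifying the reduction to the ODE and its first integral away from $\{\varphi=\mu\}$ with the stated boundary behaviour, the split $\varphi(0)>\mu$ versus $\varphi(0)=\mu$, and the degenerate case $P=\infty$. (For $a\le0$, where solutions are nonnegative, Step 2 has a self‑contained alternative: one checks directly that $L(\varphi\varphi')>0$ on $(-P/2,0)$ from the sign of $K_P(x-y)-K_P(x+y)$ and from $\varphi\varphi'\le0$ on $(0,P/2)$, and reads off $\varphi'>0$ and $\varphi<\mu$ from the differentiated equation $(\varphi-\mu)\varphi'=-3L(\varphi\varphi')$; but the ODE argument has the advantage of being uniform in $a$.)
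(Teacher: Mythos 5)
Your proof is essentially correct, but it follows a genuinely different route from the paper's, and the difference is worth spelling out. The paper stays entirely at the level of the non-local equation: it differentiates \eqref{eq: DP} once, uses the double symmetrisation \eqref{eq: L of varphi'varphi} together with the kernel inequality $K_P(x-y)>K_P(x+y)$ for $x,y\in(-P/2,0)$, and (for $a>0$) imports the positivity of $\varphi$ from \cite{Lenells2005tws} to read off $\varphi'>0$ and $\varphi<\mu$ simultaneously from $(\mu-\varphi)\varphi'>0$; the second-derivative signs are then obtained by integrating by parts against $K_P'$. You instead apply $1-\partial_x^2$ to pass to the local second-order ODE, identify its first integral $\bigl[(\varphi')^2-(\varphi^2-a)\bigr](\mu-\varphi)^2=\text{const}$ (which one can check does close, using the ODE $(\mu-\varphi)\varphi''=(\varphi')^2+a+\mu\varphi-2\varphi^2$), pin the constant to zero at any boundary point where $\varphi=\mu$, and then run a local-uniqueness argument for the autonomous planar system. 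This is the Lenells-style local analysis, and it is clean and uniform in $a$; it even gives $\varphi(0)<\mu$ already under $BUC^1$ rather than $BUC^2$. What the paper's heavier argument buys is that it never uses the locality of $(1-\partial_x^2)$ — it works directly with $K$ and $K_P$ — which is precisely the non-local methodology the paper advertises and intends to be portable to kernels with no local counterpart. Your approach is intrinsically tied to the DP symbol $m(\xi)=(1+\xi^2)^{-1}$.

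Two small points you should tighten. First, you invoke Theorem \ref{thm: regularity I} for smoothness ``where $\varphi\neq\mu$'', but that theorem is stated under $\varphi\leq\mu$ and only gives smoothness on $\{\varphi<\mu\}$; in your Step 1 case $\varphi(0)>\mu$ you work on an interval where $\varphi>\mu$, so you need the routine companion statement that the same bootstrapping argument (with $\varphi=\mu+\sqrt{\mu^2+2a-3L(\varphi^2)}$ on $\{\varphi>\mu\}$) yields smoothness there. Second, the parenthetical handling of $P=\infty$ (excluding $\varphi\geq\mu$ throughout, since Theorem \ref{thm: solutions to DP} is stated for $P<\infty$) should actually be written out: the obstruction is that $\inf\varphi$ need not be attained for a solitary wave, so the clean ``min below $\mu$'' argument from the periodic case does not transfer verbatim. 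Neither point is a substantive obstacle, but as written the proof leans on them without proof.
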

\begin{proof}
Let $\varphi$ be a non-constant and even solution that is non-decreasing on $(-P/2,0)$. We can rewrite \eqref{eq: DP} as $(\mu-\varphi)^2=\mu^2+2a-3L(\varphi^2)$, and if $\varphi\in BC^1(\R)$ we can differentiate on each side to get
\begin{equation}
\label{eq: first der}
(\mu-\varphi(x))\varphi'(x)=\frac{3}{2}L(\varphi^2)'(x).
\end{equation}
As $\varphi$ is even, $\varphi'$ will be odd and using the evenness of $K_P$ we get
\begin{equation}
\label{eq: L of varphi'varphi}
L(\varphi^2)'(x)=2\int_{-P/2}^0 (K_P(x-y)-K_P(x+y))\varphi'(y)\varphi(y)\dy.
\end{equation}
We claim that $K_P(x-y)>K_P(x+y)$ for any $x,y\in (-P/2,0)$. Fix $x\in (-P/2,0)$. As $K_P$ strictly decreases from the origin in $(-P/2,P/2)$ and is $P$-periodic, the claim follows if, for all $y\in (-P/2,0)$,
\begin{equation*}
|x-y|<\min \lbrace |x+y|, x+y+P\rbrace.
\end{equation*}
As $x$ and $y$ are same-signed, we have
\begin{equation*}
|x+y|=|x|+|y|>|x-y|.
\end{equation*}
Moreover, we have $-x<P+x$ and $-y<P+y$, so $|x-y|=\max\lbrace x-y, y-x\rbrace<P+x+y$. This proves the claim.

Now we claim that $\varphi'\varphi\gneq 0$ on $(-P/2,0)$. By assumption $\varphi'\gneq 0$ on this interval. If $a\leq 0$, it is plain to see that $\varphi>0$ as the right hand side of \eqref{eq: DP} is strictly positive whenever $\varphi\leq 0$. For a $BUC^1(\R)$ solution the same is true when $0<a<\mu^2$ too; this follows from equation (4.14) in \cite{Lenells2005tws}. Hence the integrand in \eqref{eq: L of varphi'varphi} is non-negative and strictly positive on a set of positive measure in $(-P/2,0)$, and it follows that the right hand side of \eqref{eq: first der} is strictly positive for $x\in (-P/2,0)$. This implies the first part of the statement.

Assume now that $\varphi\in BUC^2(\R)$. Then we can differentiate each side of \eqref{eq: first der} to get
\begin{equation}
\label{eq: second der}
(\mu-\varphi(x))\varphi''(x)-(\varphi'(x))^2=\frac{3}{2}L(\varphi^2)''(x)=3L(\varphi''\varphi+(\varphi')^2)(x).
\end{equation}
Evaluating this at $x=0$ and using the evenness of $\varphi''$, $\varphi$, $(\varphi')^2$ and $K_P$, we get
\begin{align*}
(\mu-\varphi(0))\varphi''(0)= & 6\int_{-P/2}^0 K_P(y)\left(\varphi''(y)\varphi(y)+\varphi'(y)^2\right)\dy \\
= & 6\left[K_P(y)\varphi'(y)\varphi(y)\right]_{y=-P/2}^{y=0} -6\int_{-P/2}^0 K_P'(y)\varphi'(y)\varphi(y)\dy.
\end{align*}
The first term on the second line vanishes as $\varphi'(0)=\varphi'(-P/2)=0$. If $P=\infty$, then $\lim_{y\rightarrow -\infty}K(y)=0$ and we get the same conclusion. As $K_P$ is strictly increasing on $(-P/2,0)$, we get that the final integral is strictly positive. That is,
\begin{equation*}
(\mu-\varphi(0))\varphi''(0)=-6\int_{-P/2}^0 K_P'(y)\varphi'(y)\varphi(y)\dy<0
\end{equation*}
As we already proved that $\varphi<\mu$ on $(-P/2,0)$, it is not possible that $\varphi(0)>\mu$, and thus we conclude that $\varphi(0)<\mu$ and $\varphi''(0)<0$.

Now we assume that $P<\infty$. Note that $K_P(-P/2-y)=K_P(P/2-y)=K_P(-P/2+y)=K_P(P/2+y)$. Evaluating \eqref{eq: second der} at $x=-P/2$, we get
\begin{align*}
(\mu-\varphi(-P/2))\varphi''(-P/2)= & 6\int_{-P/2}^0 K_P(P/2+y)\left(\varphi''(y)\varphi(y)+\varphi'(y)^2\right)\dy \\
= & \left[K_P(P/2+y)\varphi'(y)\varphi(y)\right]_{y=-P/2}^{y=0}-6\int_{-P/2}^0 K_P'(P/2+y)\varphi'(y)\varphi(y)\dy.
\end{align*}
As above, the first term in the second line vanishes. As $K_P$ is strictly decreasing on $(0,P/2)$, we get that $K_P'(P/2+y)<0$ for $y\in (-P/2,0)$, and it follows that the last integral is negative. That is,
\begin{equation*}
(\mu-\varphi(-P/2))\varphi''(-P/2)=-6\int_{-P/2}^0 K_P'(P/2+y)\varphi'(y)\varphi(y)\dy>0,
\end{equation*}
and we conclude that $\varphi''(-P/2)>0$.
\end{proof}

\subsection{Singularity at \(\varphi=\mu\)}
Now we investigate what happens as a solution approaches $\mu$ from below. First we show that a solution is smooth below $\mu$:
\begin{theorem}
\label{thm: regularity I}
Let $\varphi\leq \mu$ be a solution of \eqref{eq: DP}. Then:
\begin{itemize}
\item[(i)] If $\varphi<\mu$ uniformly on $\R$, then $\varphi\in C^{\infty}(\R)$ and all of its derivatives are uniformly bounded on $\R$.
\item[(ii)] If $\varphi<\mu$ uniformly on $\R$ and $\varphi\in L^2(\R)$, then $\varphi\in H^\infty(\R)$.
\item[(iii)] $\varphi$ is smooth on any open set where $\varphi<\mu$.
\end{itemize}
\end{theorem}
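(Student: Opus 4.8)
The plan is to combine the two-derivative smoothing of $L$ with the algebraic reformulation of \eqref{eq: DP}. Since $(1-\partial_x^2)L=\mathrm{id}$, one has the ``elliptic'' identity $(Lf)''=Lf-f$, so that $L$ gains two derivatives in $W^{k,\infty}(\R)$, in $H^k(\R)$, and locally in $C^k$. On the other hand, completing the square in \eqref{eq: DP} gives $(\mu-\varphi)^2=\mu^2+2a-3L(\varphi^2)$, and since $\varphi\le\mu$ we may extract the positive root to obtain, a.e.,
\begin{equation*}
\varphi=\mu-\sqrt{\mu^2+2a-3L(\varphi^2)}.
\end{equation*}
The role of the uniform hypothesis $\varphi<\mu$ is precisely that the radicand $(\mu-\varphi)^2$ is then bounded away from $0$, so $t\mapsto\sqrt{t}$ acts on it as a smooth function on a compact subinterval of $(0,\infty)$, and composition with it preserves regularity. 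Note first that any solution with $\varphi\le\mu$ is automatically bounded: since $L(\varphi^2)\ge0$, \eqref{eq: DP} forces $(\varphi-\mu)^2\le\mu^2+2a$, hence $\mu-\sqrt{\mu^2+2a}\le\varphi\le\mu$ and $\varphi\in L^\infty(\R)$.

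For (i): from $\varphi\in L^\infty(\R)$ we get $\varphi^2\in L^\infty(\R)$, so $L(\varphi^2)\in L^\infty(\R)$ and $(L(\varphi^2))''=L(\varphi^2)-\varphi^2\in L^\infty(\R)$, i.e.\ $L(\varphi^2)\in W^{2,\infty}(\R)$; composing with the square root then gives $\varphi\in W^{2,\infty}(\R)$. The bootstrap is now immediate: if $\varphi\in W^{2k,\infty}(\R)$ then $\varphi^2\in W^{2k,\infty}(\R)$, as $W^{2k,\infty}(\R)$ is an algebra, and inserting this into $(L(\varphi^2))''=L(\varphi^2)-\varphi^2$ finitely many times raises $L(\varphi^2)$ to $W^{2k+2,\infty}(\R)$, whence $\varphi\in W^{2k+2,\infty}(\R)$ by the same composition step. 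Hence $\varphi\in\bigcap_k W^{2k,\infty}(\R)=C^\infty(\R)$ with all derivatives uniformly bounded.

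Part (ii) builds on (i). Knowing $\varphi\in C^\infty(\R)$ with bounded derivatives, differentiating \eqref{eq: DP} gives $(\mu-\varphi)\varphi'=\tfrac32(L(\varphi^2))'$, so $\varphi'=\tfrac{3}{2(\mu-\varphi)}(L(\varphi^2))'$ with $\tfrac1{\mu-\varphi}$ smooth and bounded together with all its derivatives. Starting from $\varphi\in L^2(\R)$ (hence $\varphi^2\in L^2(\R)$, by boundedness), we get $L(\varphi^2)\in H^2(\R)$, so $(L(\varphi^2))'\in H^1(\R)$ and $\varphi'\in H^1(\R)$, i.e.\ $\varphi\in H^2(\R)$; and if $\varphi\in H^k(\R)$ with $k\ge1$ then $\varphi^2\in H^k(\R)$, $L(\varphi^2)\in H^{k+2}(\R)$, and $\varphi'=\tfrac{3}{2(\mu-\varphi)}(L(\varphi^2))'\in H^{k+1}(\R)$, using that $H^k(\R)$ ($k\ge1$) is an algebra stable under multiplication by bounded smooth functions with bounded derivatives; thus $\varphi\in H^{k+2}(\R)$. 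Therefore $\varphi\in H^\infty(\R)$. (The equation then forces $a=0$, but this is not used.)

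Part (iii) is the localised counterpart of (i), and the only genuine issue is the nonlocality of $L$ — which is exactly what the identity $(Lf)''=Lf-f$ takes care of. Globally $L(\varphi^2)\in W^{2,\infty}(\R)$ since $\varphi\in L^\infty(\R)$, so $\varphi=\mu-\sqrt{\mu^2+2a-3L(\varphi^2)}$ has a continuous representative (the radicand being a nonnegative continuous function), and on an open set $U$ where $\varphi<\mu$ the radicand is, on each $V\Subset U$, continuous and bounded below by a positive constant. Because $(L(\varphi^2))''=L(\varphi^2)-\varphi^2$ holds on all of $\R$, the regularity of $L(\varphi^2)$ on $U$ is controlled solely by that of $\varphi^2$ on $U$ (the term $L(\varphi^2)$ on the right is already one degree smoother at each stage), so bootstrapping the pair $(\varphi,L(\varphi^2))$ along a nested chain $V\Subset V'\Subset U$ exactly as in (i) yields $\varphi\in C^k(V)$ for every $k$, hence $\varphi\in C^\infty(U)$. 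The points I treat as routine are the algebra and composition properties of $W^{k,\infty}$, $H^k$ ($k\ge1$) and $C^k_{\mathrm{loc}}$, together with the finitely many iterations needed to carry regularity across the order-two gap of $L$; the one place that requires real care is (iii), namely checking that knowing $\varphi<\mu$ on $U$ alone still suffices, which it does because $L$ already smooths globally out of $L^\infty$ while $(Lf)''=Lf-f$ is local, so the square-root argument is continuous everywhere and smooth on compact subsets of $U$.
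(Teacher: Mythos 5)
Your proof is correct, and it takes a genuinely different route from the paper's. The paper treats $L$ as an $S^{-2}$ Fourier multiplier acting on Besov spaces, iterating the gain $B^s_{p,q}\to B^{s+2}_{p,q}$ (in the Zygmund scale $\mathcal{C}^s=B^s_{\infty,\infty}$ for part (i), and in $H^s=B^s_{2,2}$ for (ii)); for part (iii) it then cites an external localization theorem for Zygmund regularity. You instead extract the elliptic identity $(Lf)''=Lf-f$ from $(1-\partial_x^2)L=\mathrm{id}$, which makes the two-derivative gain a purely differential — hence automatically local — fact, and you run the bootstrap in the classical scales $W^{k,\infty}$, $H^k$, and $C^k_{\mathrm{loc}}$. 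This is more elementary and self-contained: no Littlewood–Paley machinery, and crucially (iii) falls out directly from the locality of the ODE identity with no need for an external lemma. The one step you take as routine that merits a remark is the passage from $f,\,f''\in L^\infty$ to $f\in W^{2,\infty}$: this needs the Landau–Kolmogorov interpolation inequality $\|f'\|_\infty^2\lesssim\|f\|_\infty\|f''\|_\infty$ to supply the intermediate derivative, since $f'\in L^\infty$ is not formally immediate from the other two. Aside from that, your reduction to the positive square-root composition, the algebra and composition properties of $W^{k,\infty}$ and $H^k$, and the $H^k$ bootstrap via $\varphi'=\tfrac{3}{2(\mu-\varphi)}(L(\varphi^2))'$ all check out. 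The paper's Besov-space route is more systematic and would transfer to other Fourier multipliers of negative order where no closed-form ODE identity is available, which is presumably why the author adopts it; for the specific kernel $K=\tfrac12 e^{-|x|}$, your approach is the cleaner one.
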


\begin{proof}
Assume first that $\varphi<\mu$ uniformly on $\R$. Note that as $\varphi\rightarrow -\infty$, the left-hand side of \eqref{eq: DP} goes to $\infty$, hence $\varphi$ must be bounded below as well. Clearly, $|m^{(n)}(\xi)|\lesssim (1+|\xi|)^{-2-n}$ (that is, $m$ is a $S^{-2}$-multiplier) and $L$ is therefore continuous from the Besov space $B_{p,q}^s(\R)$ to $B_{p,q}^{s+2}(\R)$ for all $s\in\R$ and $1\leq p,q\leq \infty$. Denoting by $\mathcal{C}^s(\R)$, $s\in \R$ the Zygmund space $B_{\infty,\infty}^s(\R)$, we have in particular that $L$ maps $L^\infty(\R)\subset B_{\infty,\infty}^0(\R)$ into $\mathcal{C}^2(\R)$, and therefore $\varphi\mapsto L(\varphi^2)$ maps $L^{\infty}(\R)$ into $\mathcal{C}^2(\R)$. Recall that if $s\in \R_{+}\setminus \N$, then $\mathcal{C}^s(\R)=C^s(\R)$, the ordinary H\"older space, and if $s\in \N$ then $W^{s,\infty}(\R)\subsetneq \mathcal{C}^s(\R)$. 

As $\varphi$ solves \eqref{eq: DP} we have
\begin{equation*}
(\varphi-\mu)^2=\mu^2+2a-3L(\varphi^2).
\end{equation*}
The assumption $\varphi<\mu$ therefore implies that $3L(\varphi^2)<\mu^2+2a$, and the operator $L(\varphi^2)\mapsto \mu -\sqrt{\mu^2+2a-3L(\varphi^2)}$ therefore maps $B_{p,q}^s(\R)\cap L^\infty(\R)$ into itself for $s>0$. Since $\varphi<\mu$, we also get that $\mu -\sqrt{\mu^2+2a-3L(\varphi^2)}=\varphi$. Combining this map with $\varphi\mapsto L(\varphi^2)$ and iterating, we get (i). When $p=q=2$, $B_{p,q}^s(\R)$ can be identified with $H^s(\R)$. Assume now that $\varphi\in L^2(\R)$ in addition. As $\varphi$ is also bounded, we get that $\varphi^2\in L^2(\R)\cap L^\infty(\R)$, and in general $\varphi^2\in H^s(\R)\cap L^\infty(\R)$ if $\varphi\in H^s(\R)\cap L^\infty$, and thus $\varphi\mapsto L(\varphi^2)$ maps $H^s(\R)\cap L^{\infty}(\R)$ to $H^{s+2}(\R)\cap L^{\infty}(\R)$, and we can apply the above iteration argument again. This proves (ii).

Lastly, to prove (iii), we note that if $\varphi\in L^\infty(\R)$ and $\mathcal{C}_{loc}^s$ on an open set $U$ in the sense that $\psi\varphi\in \mathcal{C}^s(\R)$ for any $\psi\in C_0^\infty(U)$, we still get that $L(\varphi)$ is $\mathcal{C}_{loc}^{s+2}$ in $U$ (the proof of this is the same as in Theorem 5.1 \cite{Ehrnstrom2016owc}). Thus we can apply the same iteration argument as above again.
\end{proof}

The next lemma will be essential for showing that the global bifurcation curves do not converge to a trivial case.

\begin{lemma}
\label{lem: global bound}
Let $P<\infty$, and let $\varphi$ be an even, non-constant solution of \eqref{eq: DP} that is non-decreasing on $(-P/2,0)$ with $\varphi \leq \mu$. Then there exists a universal constant $C_{K,P,\mu}>0$, depending only on the kernel $K$ and the period $P$ and $\mu>0$, such that
\begin{equation*}
\mu-\varphi(\frac{P}{2})\geq C_{K,P,\mu}.
\end{equation*}

\end{lemma}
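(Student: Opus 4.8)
The plan is to extract a quantitative lower bound for $\mu-\varphi(P/2)$ from equation \eqref{eq: DP} evaluated at the trough $x=P/2$, by combining the identity $(\mu-\varphi)^2=\mu^2+2a-3L(\varphi^2)$ with the fact that $\varphi\le\mu$ everywhere together with a lower bound on $L(\varphi^2)$ at $P/2$ coming from the strict positivity of $K_P$ and the previously established monotonicity structure. Concretely, evaluating \eqref{eq: rewritten DP} at $x=P/2$ gives
\begin{equation*}
(\mu-\varphi(P/2))^2=\mu^2+2a-3L(\varphi^2)(P/2),
\end{equation*}
so it suffices to bound $L(\varphi^2)(P/2)$ strictly away from $\tfrac{\mu^2+2a}{3}$ from below, i.e.\ to produce $C_{K,P,\mu}>0$ with $\mu^2+2a-3L(\varphi^2)(P/2)\ge C_{K,P,\mu}^2$. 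Equivalently, since $L(\varphi^2)(0)\ge\varphi(P/2)^2$ is not quite what we want, the cleanest route is: at the crest $x=0$ we have, by Theorem~\ref{thm: solutions to DP} and the arguments in its proof, $3L(\varphi^2)(0)\le\mu^2+2a$ is impossible to be violated only trivially — more usefully, $L(\varphi^2)$ attains its \emph{maximum} at $0$ and its \emph{minimum} at $\pm P/2$ (this follows from the evenness and complete monotonicity of $K_P$ together with $\varphi$ being non-decreasing on $(-P/2,0)$, exactly as in the proof of Theorem~\ref{thm: strictly increasing}).

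The key step is therefore to show $L(\varphi^2)(0)-L(\varphi^2)(P/2)$ is bounded below by a constant times something we already control. First I would write
\begin{equation*}
L(\varphi^2)(0)-L(\varphi^2)(P/2)=\int_{-P/2}^{P/2}\bigl(K_P(y)-K_P(P/2-y)\bigr)\varphi(y)^2\,\mathrm{d}y,
\end{equation*}
and observe that $K_P(y)-K_P(P/2-y)>0$ for $y\in(-P/2,0)$ and $<0$ for $y\in(0,P/2)$, with the positive and negative parts being reflections of each other; using that $\varphi^2$ is even and non-increasing in $|y|$ away from $0$ (consequence of $\varphi\le\mu$, $\varphi$ non-decreasing on $(-P/2,0)$, and $\varphi>0$ when $a>0$ or $\varphi\ge0$ when $a\le0$ as in the proof of Theorem~\ref{thm: strictly increasing}, so $|\varphi|$ is maximal at $0$), one gets
\begin{equation*}
L(\varphi^2)(0)-L(\varphi^2)(P/2)\ge c_{K,P}\bigl(\varphi(0)^2-\varphi(P/2)^2\bigr)
\end{equation*}
for an explicit $c_{K,P}\in(0,1)$ depending only on $K$ and $P$ (obtained by pairing the reflected pieces of the kernel difference and using a weighted rearrangement/Chebyshev-type inequality). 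Combining this with $3L(\varphi^2)(0)=\mu^2+2a-(\mu-\varphi(0))^2$ and $3L(\varphi^2)(P/2)=\mu^2+2a-(\mu-\varphi(P/2))^2$ yields
\begin{equation*}
(\mu-\varphi(P/2))^2-(\mu-\varphi(0))^2\ge 3c_{K,P}\bigl(\varphi(0)^2-\varphi(P/2)^2\bigr).
\end{equation*}
Since $\varphi(0)\le\mu$ and $\varphi(0)>\varphi(P/2)$, and since from Theorem~\ref{thm: solutions to DP} we have $\varphi(0)=\max\varphi\ge\mu/2$ (indeed $>\tfrac{\mu+\sqrt{\mu^2+8a}}{4}$), the right-hand side is bounded below in terms of $\mu$ and the gap, and a short algebraic manipulation isolates $\mu-\varphi(P/2)\ge C_{K,P,\mu}$.

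The main obstacle I anticipate is the middle inequality, namely extracting the constant $c_{K,P}$ so that $L(\varphi^2)(0)-L(\varphi^2)(P/2)\gtrsim \varphi(0)^2-\varphi(P/2)^2$ \emph{uniformly over all admissible $\varphi$} — one cannot simply bound $\varphi(y)^2$ below by $\varphi(P/2)^2$ and above by $\varphi(0)^2$ pointwise because that loses the needed factor; instead one must exploit that $\varphi^2$ is monotone in $|y|$ so that the mass of $\varphi^2-\varphi(P/2)^2$ is concentrated near $0$ where the kernel difference $K_P(y)-K_P(P/2-y)$ is largest, versus the reflected region near $P/2$ where it is most negative but $\varphi^2-\varphi(P/2)^2$ is smallest. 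This is a rearrangement-type comparison; a clean way is to split the integral at $0$, substitute $y\mapsto -y$ on $(0,P/2)$, and write the integrand as $\bigl(K_P(y)-K_P(P/2-y)\bigr)\bigl(\varphi(y)^2-\varphi(P/2-|\cdot|)^2\bigr)$-type differences where both factors have the same sign, then bound below by restricting to $y\in(-P/4,0)$ where both $K_P(y)-K_P(P/2-y)\ge c_{K,P}>0$ and $\varphi(y)^2-\varphi(P/2)^2\ge$ a fixed fraction of $\varphi(0)^2-\varphi(P/2)^2$ — here I would use again that $\varphi$ is non-decreasing on $(-P/2,0)$ to say $\varphi(y)\ge\varphi(-P/4)$ on $(-P/4,0)$ and handle the remaining interval $(-P/2,-P/4)$ separately, or simply accept a slightly worse but still positive constant. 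Once $c_{K,P}$ is pinned down the rest is elementary.
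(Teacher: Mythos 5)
Your proposed route is genuinely different from the paper's, but it has a real gap at the step you yourself flag as the main obstacle, and the resolutions you sketch do not close it.

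The problematic step is the inequality
\begin{equation*}
L(\varphi^2)(0)-L(\varphi^2)(P/2)\;\ge\; c_{K,P}\bigl(\varphi(0)^2-\varphi(P/2)^2\bigr)
\end{equation*}
with a constant $c_{K,P}$ depending only on the kernel and the period. After folding the integral and using that $\int_{-P/2}^{P/2}\bigl(K_P(y)-K_P(P/2-y)\bigr)\,\mathrm{d}y=0$, the left-hand side equals
\begin{equation*}
2\int_0^{P/4}\bigl(K_P(y)-K_P(P/2-y)\bigr)\bigl(\varphi(y)^2-\varphi(P/2-y)^2\bigr)\,\mathrm{d}y,
\end{equation*}
and both factors are non-negative, but no uniform comparison with $\varphi(0)^2-\varphi(P/2)^2$ can follow from monotonicity alone. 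A concrete counterexample class: take $\varphi^2-\varphi(P/2)^2$ concentrated near $y=0$ (equal to its maximum value on $|y|<\delta$ and essentially zero outside), so that the fold integral is $\approx 2\bigl(\varphi(0)^2-\varphi(P/2)^2\bigr)\int_0^{\delta}\bigl(K_P(y)-K_P(P/2-y)\bigr)\,\mathrm{d}y\to 0$ as $\delta\to 0$ while the right-hand side stays fixed, so the ratio can be made arbitrarily small. Your proposed fix of restricting to $y\in(-P/4,0)$ fails on two counts: $K_P(y)-K_P(P/2-y)$ vanishes at $y=-P/4$, so it is not bounded below by a positive constant on that interval; and the statement $\varphi(y)\ge\varphi(-P/4)$ on $(-P/4,0)$ gives no control on $\varphi(-P/4)^2-\varphi(P/2)^2$ relative to $\varphi(0)^2-\varphi(P/2)^2$, which is precisely the quantity you need a fixed fraction of. Restricting further (e.g.\ to $(-P/8,0)$) fixes the first problem but produces a bound in terms of $\varphi(P/8)^2-\varphi(3P/8)^2$, which again has no uniform lower bound in terms of $\varphi(0)^2-\varphi(P/2)^2$. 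A Chebyshev-ordering argument only yields non-negativity (since the kernel difference integrates to zero), not a quantitative constant. So the middle inequality, as stated, is simply false for general even monotone profiles, and you have not used the fact that $\varphi$ solves the equation to rescue it.

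The paper avoids this entirely by a compactness-style contradiction. It works with the differentiated identity $(\mu-\varphi(x))\varphi'(x)\ge 3\int_{-P/2}^0\bigl(K_P(x-y)-K_P(x+y)\bigr)\varphi(y)\varphi'(y)\,\mathrm{d}y$ for $x$ in a fixed compact subinterval of $(-P/2,0)$, and the crucial move is the hypothesis ``for every $k<\mu$ there is an admissible solution with $k\le\varphi\le\mu$,'' which lets one replace $\varphi(y)$ in the integrand by $k$. That pointwise lower bound is what is really needed, and it does not follow from a rearrangement comparison between $L(\varphi^2)$-differences and $\varphi^2$-differences. After integrating in $x$ over the fixed subinterval, the factors $\varphi(-P/8)-\varphi(-3P/8)$ cancel on both sides (this is where strict monotonicity is used), yielding $\mu-\varphi(P/2)\ge c\,k$ for a constant $c>0$ depending only on $K_P$ and $P$, and letting $k\nearrow\mu$ gives the contradiction. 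If you want to repair your approach, you should likewise pass to a contradiction where $\varphi$ is uniformly close to $\mu$: then $\varphi(y)^2-\varphi(P/2)^2$ can be bounded below on a fixed subinterval using $\varphi\ge k$ rather than via a (false) rearrangement inequality.

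One smaller note: your algebra in the final step is actually fine. From $(\mu-\varphi(P/2))^2-(\mu-\varphi(0))^2\ge 3c_{K,P}\bigl(\varphi(0)^2-\varphi(P/2)^2\bigr)$, dividing by $\varphi(0)-\varphi(P/2)>0$ gives $(\mu-\varphi(0))+(\mu-\varphi(P/2))\ge 3c_{K,P}\bigl(\varphi(0)+\varphi(P/2)\bigr)$, and since $\mu-\varphi(P/2)\ge\mu-\varphi(0)$ this does isolate a lower bound for $\mu-\varphi(P/2)$ in terms of $\mu$. The only gap is producing $c_{K,P}$.
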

\begin{proof}
If $\varphi(-P/2)=\varphi(P/2)<0$, the statement is true with $C_{K,P,\mu}=\mu$. Assume therefore that $\varphi$ is non-negative. From the evenness and periodicity of $K_P$ and $\varphi$, we get the formula
\begin{align}
L(\varphi^2)(x+h)&-L(\varphi^2)(x-h) \nonumber \\
= & \int_{-P/2}^0 (K_P(x-y)-K_P(x+y))(\varphi(y+h)^2-\varphi(y-h)^2)\dy. \label{eq: double symmetrisation}
\end{align}
As $\varphi\geq 0$ is non-decreasing, both factors in the integrand are non-negative for $x\in (-P/2,0)$ and $h\in (0,P/2)$. We also have the equality
\begin{equation}
\label{eq: difference of solution}
(2\mu-\varphi(x)-\varphi(y))(\varphi(x)-\varphi(y))=3\left(L(\varphi^2)(x)-L(\varphi^2)(y)\right),
\end{equation}
which shows that $L(\varphi^2)(x)=L(\varphi^2)(y)$ whenever $\varphi(x)=\varphi(y)$. As $\varphi$ is assumed to be non-constant and non-negative, this identity together with \eqref{eq: double symmetrisation} implies that $\varphi$ is strictly increasing on $(-P/2,0)$, and it therefore follows from Theorem \ref{thm: regularity I} that $\varphi$ is smooth away from $x=kP$, $k\in \Z$.
Let $x\in \left[-\frac{3P}{8},-\frac{P}{8}\right]$. Then for a solution $\varphi$ as in the assumptions,
\begin{equation*}
(\mu-\varphi(\frac{P}{2}))\varphi'(x)\geq (\mu-\varphi(x))\varphi'(x)=\frac{3}{2}\lim_{h\rightarrow 0}\frac{L(\varphi^2)(x+h)-L(\varphi^2)(x-h)}{4h}.
\end{equation*}
 As the integrand in \eqref{eq: double symmetrisation} is non-negative for $h\in (0,P/2)$ and non-positive for $h\in (-P/2,0)$, we can apply Fatou's lemma to the limit above and we get
\begin{align*}
(\mu-\varphi(\frac{P}{2}))\varphi'(x) &\geq 3\int_{-P/2}^{P/2} K_P(x-y)\varphi(y)\varphi'(y)\dy \\
&=3\int_{-P/2}^0 (K_P(x-y)-K_P(x+y))\varphi(y)\varphi'(y)\dy.
\end{align*}
Assume for a contradiction that the statement is not true. Then for all $k<\mu$ there must exist a solution $\varphi$ satisfying the assumptions and such that $k\leq \varphi \leq \mu$. Then $\mu-\varphi(P/2)<\mu-k$. On the other hand, as $K_P(x-y)>K_P(x+y)$ for $x,y\in (-P/2,0)$, we get that
\begin{align*}
(\mu-\varphi(\frac{P}{2}))\varphi'(x) &\geq 3\int_{-P/2}^0 (K_P(x-y)-K_P(x+y))\varphi(y)\varphi'(y)\dy \\
& \geq 3k\int_{-3P/8}^{-P/8}(K_P(x-y)-K_P(x+y))\varphi'(y)\dy.
\end{align*}
There is a universal constant $\tilde{\lambda}_{K,P}>0$ depending only on $K_P$ and $P<\infty$ such that
\begin{equation*}
\min \lbrace K_P(x-y)-K_P(x+y): x,y\in \left[-\frac{3P}{8},-\frac{P}{8}\right]\rbrace\geq \tilde{\lambda}_{K,P}.
\end{equation*}
Integrating both sides above over $x\in \left(-\frac{3P}{8},-\frac{P}{8}\right)$, we get that
\begin{equation*}
(\mu-\varphi(\frac{P}{2}))(\varphi(-P/8)-\varphi(-3P/8))\geq 3k \frac{P}{8}\tilde{\lambda}_{K,P}(\varphi(-P/8)-\varphi(-3P/8)).
\end{equation*}
As shown above $\varphi$ is strictly increasing on $(-P/2,0)$, so $\varphi(-P/8)>\varphi(-3P/8)$ and we may divide out $(\varphi(-P/8)-\varphi(-3P/8))$ on both sides to get
\begin{equation*}
(\mu-\varphi(\frac{P}{2}))\geq 3k \frac{P}{8}\tilde{\lambda}_{K,P}.
\end{equation*}
This implies that $\mu-k\geq 3k \frac{P}{8}\tilde{\lambda}_{K,P}$ for all $k<\mu$. Taking the limit $k\nearrow \mu$, we get a contradiction.
\end{proof}

Now we come to the main result of this section, concerning the regularity at the point where $\varphi=\mu$.
\begin{theorem}
\label{thm: regularity II}
Let $\varphi\leq \mu$ be a solution of \eqref{eq: DP} which is even, non-constant, and non-decreasing on $(-P/2,0)$ with $\varphi(0)=\mu$. Then:
\begin{itemize}
\item[(i)] $\varphi$ is smooth on $(-P,0)$.
\item[(ii)] $\varphi\in C^{0,1}(\R)$, i.e. $\varphi$ is Lipschitz.
\item[(iii)] $\varphi$ is exactly Lipschitz at $x=0$; that is, there exists constants $0<c_1<c_2$ such that
\begin{equation*}
c_1|x|\leq |\mu-\varphi(x)| \leq c_2|x|
\end{equation*}
for $|x|\ll 1$.
\end{itemize}
\end{theorem}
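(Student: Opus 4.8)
### Proof proposal

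The plan is to prove (i), (ii), (iii) in that order, since each builds on the previous. For \textbf{(i)}, I would combine Theorem~\ref{thm: strictly increasing} and the argument already appearing in the proof of Lemma~\ref{lem: global bound}: the identity \eqref{eq: difference of solution} together with the double-symmetrisation formula \eqref{eq: double symmetrisation} forces $\varphi$ to be \emph{strictly} increasing on $(-P/2,0)$, hence $\varphi<\varphi(0)=\mu$ on $(-P/2,0)$, and by evenness $\varphi<\mu$ on all of $(-P,0)$ except possibly... no, in fact $\varphi<\mu$ on the whole punctured interval $(-P,0)$ because $\varphi(-P/2)<\mu$ (it is the minimum of a non-constant solution, or use Lemma~\ref{lem: global bound}). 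Then Theorem~\ref{thm: regularity I}(iii) applied to the open set $(-P,0)$, where $\varphi<\mu$ uniformly on compact subsets, gives smoothness there.

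For \textbf{(ii)} and \textbf{(iii)} the idea is to analyse the behaviour of $L(\varphi^2)$ near $x=0$ using the explicit kernel $K(x)=\tfrac12 e^{-|x|}$ (or $K_P$ in \eqref{eq: K_P}), which has a corner at the origin: $K_P'(0^+)-K_P'(0^-)=-1$. Write \eqref{eq: DP} as $(\mu-\varphi(x))^2=\mu^2+2a-3L(\varphi^2)(x)$ and set $g(x):=\mu^2+2a-3L(\varphi^2)(x)$, so $g(0)=0$, $g\ge 0$, and $\mu-\varphi(x)=\sqrt{g(x)}$. Since $\varphi\in L^\infty$, Theorem~\ref{thm: regularity I}-type mapping properties give $L(\varphi^2)\in C^{1,1}$ away from the corner contribution; more precisely, using that $\varphi^2$ is bounded and (by (i)) continuous, I would show $g\in C^1$ on a punctured neighbourhood of $0$ with $g'(0^\pm)=\mp 3\,(\text{const})$, the jump coming from differentiating $K_P$ across its corner: explicitly, $\tfrac{d}{dx}L(\varphi^2)(x)=\int K_P'(x-y)\varphi(y)^2\dy$ and the corner of $K_P$ at $0$ contributes a term proportional to $\varphi(x)^2$ to the jump in $g'$. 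Evaluating at $x\to 0$ and using $\varphi(0)=\mu$, one finds $g'(0^+)=-g'(0^-)=-c$ for an explicit $c>0$ (morally $c = 3\mu^2 - (\text{smooth part})$; the key point is $c>0$, which should follow because the smooth part of $g'$ vanishes at $0$ by evenness, so $c$ equals exactly the jump $3\mu^2\cdot(\text{jump of }K_P') $... I'd need to check the constant, but positivity is the real content). Hence near $0$, $g(x)=c|x|+o(|x|)$ with $c>0$, which yields $\mu-\varphi(x)=\sqrt{c|x|+o(|x|)}\sim\sqrt{c}\,|x|^{1/2}$.

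Wait — that would give Hölder-$1/2$, not Lipschitz, contradicting (iii). So the mechanism must be different: $g'$ must actually \emph{vanish} at $0$ from the smooth side AND the corner contribution must vanish too because $\varphi(0)=\mu$ enters quadratically in a cancelling way. Let me reconsider: differentiating \eqref{eq: first der}, which reads $(\mu-\varphi)\varphi'=\tfrac32 L(\varphi^2)'$, and noting (from (i) and Theorem~\ref{thm: strictly increasing}) that $\varphi'$ stays \emph{bounded} — indeed on $(-P/2,0)$ one has $(\mu-\varphi)\varphi' = \tfrac32 L(\varphi^2)'$ with the right side bounded (since $L$ gains two derivatives on $L^\infty$) and $\mu-\varphi>0$ strictly increasing toward $0$... this is where the quantitative Lemma~\ref{lem: global bound} is NOT enough; instead I would show $L(\varphi^2)'(0^-)=0$ by evenness (it's the derivative of an even $C^1$ function... but $L(\varphi^2)$ is NOT $C^1$ at $0$). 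The correct route: from \eqref{eq: L of varphi'varphi}-type formulas, bound $|L(\varphi^2)'(x)|\le C|x|$ near $0$ by splitting the kernel integral at $y=x$, using the corner of $K_P$ and the fact that $\varphi(y)^2-\varphi(0)^2 = O(|y|)$ near $0$ (Lipschitz of $\varphi^2$, bootstrapped from $\varphi\in L^\infty$). Then $(\mu-\varphi(x))\varphi'(x)=\tfrac32 L(\varphi^2)'(x) = O(|x|)$. Integrating $\tfrac{d}{dx}\big[(\mu-\varphi)^2\big] = -3 L(\varphi^2)' = O(|x|)$ gives $(\mu-\varphi(x))^2 = O(x^2)$, i.e. the \emph{upper} bound $\mu-\varphi(x)\le c_2|x|$. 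For the \emph{lower} bound $\mu-\varphi(x)\ge c_1|x|$: from $\tfrac{d}{dx}(\mu-\varphi)^2 = -3L(\varphi^2)'(x)$ and the explicit kernel computation, show $-L(\varphi^2)'(x) \ge c_1' |x|$ near $0$ (the corner of $K_P$ at $0$ gives a genuinely nonzero one-sided contribution, \emph{and} it is positive for $x\in(-P/2,0)$ because $\varphi(y)^2$ is largest near $y=0$ where $\varphi=\mu$ — this uses monotonicity from (i) and Lemma~\ref{lem: monotone}); integrating gives $(\mu-\varphi(x))^2\ge c_1 x^2$. Combining yields (iii), and (ii) follows since $\varphi$ is smooth away from the lattice $P\Z$ and Lipschitz at each such point with a uniform constant, hence globally Lipschitz.

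The main obstacle I expect is establishing the two-sided bound $c_1'|x| \le |L(\varphi^2)'(x)| \le c_2'|x|$ for $x$ near $0$ with $c_1'>0$: the upper bound needs the a priori regularity $\varphi^2\in C^{0,1}_{loc}$ near $0$ (which itself requires a small bootstrap from (i), or the known result that solutions below $\mu$ are smooth), while the lower bound requires isolating the corner contribution of $K_P'$ at the origin and showing it does not cancel against the smooth tail — here positivity of $K$ and the strict monotonicity of $\varphi$ (so that $\varphi(y)^2$ genuinely decreases away from $y=0$) are exactly what make the one-sided derivative jump strictly negative. A subtle point to get right is that $\varphi'(x)$ does not blow up as $x\to 0^-$ — it has a finite nonzero one-sided limit $\varphi'(0^-) = -c_1/2 < 0$ in magnitude, consistent with the peak — and this finiteness is what distinguishes the Lipschitz (peakon) case from a putative cuspon, so the argument must be arranged to \emph{prove} boundedness of $\varphi'$ near $0$ rather than assume it.
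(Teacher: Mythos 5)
Your strategy for parts (ii)--(iii) is a genuinely different route from the paper's, and it is potentially cleaner, but the write-up as it stands has a sign error and a circular step that you yourself flag without resolving. The paper estimates the integrated difference $(\mu-\varphi(x))^2 = 3(L(\varphi^2)(0)-L(\varphi^2)(x))$ directly as a kernel integral with weight $2K(y)-K(x+y)-K(x-y)$: the upper bound comes from noting this weight is $\le 0$ for $|y|\ge|x|$ (convexity of $K$) and $O(|x|)$ on $|y|<|x|$, while the lower bound is obtained from $(\mu-\varphi)\varphi'=\tfrac32 L(\varphi^2)'$ via the mean value theorem and a $\liminf$/dominated-convergence argument. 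You instead want to estimate $L(\varphi^2)'$ and integrate $\tfrac{d}{dx}(\mu-\varphi)^2 = -3L(\varphi^2)'$. The clean way to make your route rigorous is the identity $L(\varphi^2)''=L(\varphi^2)-\varphi^2$ (from $L=(1-\partial_x^2)^{-1}$): this is bounded by $\|\varphi\|_{L^\infty}^2+\|L(\varphi^2)\|_{L^\infty}$, which gives $|L(\varphi^2)'(x)|\le C|x|$ \emph{without} any Lipschitz assumption on $\varphi^2$ (your stated bootstrap ``Lipschitz of $\varphi^2$'' is circular, since that is what you are proving); and evaluating at $0$ gives $L(\varphi^2)''(0)=\tfrac{2a+\mu^2}{3}-\mu^2=\tfrac{2(a-\mu^2)}{3}<0$ because $a<\mu^2$ (Theorem~\ref{thm: solutions to DP}), which is exactly the strictly negative constant you ``would need to check.'' By continuity one then gets $L(\varphi^2)''\le -c<0$ near $0$, and since $L(\varphi^2)'(0)=0$ (it is odd), $L(\varphi^2)'(x)\ge c|x|$ for small $x<0$; integrating both bounds over $(x,0)$ gives $\tfrac{c}{2}x^2 \lesssim (\mu-\varphi(x))^2 \lesssim x^2$. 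Note the sign: your claim ``$-L(\varphi^2)'(x)\ge c_1'|x|$'' is backwards for $x<0$, where $L(\varphi^2)'$ is positive.

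Two further gaps. For (i), your route via the strict-monotonicity argument from the proof of Lemma~\ref{lem: global bound} requires $\varphi\ge 0$, which is not part of the hypotheses of the present theorem; the paper's own proof of (i) is a different contradiction argument (supposing the set $\{\varphi=\mu\}$ has an endpoint $x_0<0$ and showing $L(\varphi^2)'(x_0)\neq 0$ via the mean value theorem for integrals) that sidesteps this. For (ii), the two-sided bound $c_1|x|\le\mu-\varphi(x)\le c_2|x|$ at $x=0$, together with smoothness away from $0$, does \emph{not} by itself give global Lipschitz (compare $x\sin(1/x)$): you must also show $\varphi'$ stays bounded as $x\to 0^-$, which in your scheme follows by combining $(\mu-\varphi)\varphi'=\tfrac32 L(\varphi^2)'$ with $L(\varphi^2)'(x)\lesssim|x|$ and $\mu-\varphi(x)\gtrsim|x|$, but which you omit; the paper closes this explicitly at the end of its proof.
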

\begin{proof}
Part (i) will follow directly from Theorem \ref{thm: regularity I} if we can show that $\varphi<\mu$ on $(-P/2,0)$. Assume that $x_0\in (-P/2,0]$ is the smallest number such that $\varphi(x_0)=\mu$; as $\varphi$ is assumed to be non-constant, it must be the case that $x_0>-P/2$. Then $\varphi(x)=\mu$ and $L(\varphi^2)'(x)=0$ for $x\in [x_0,0]$. That is,
\begin{equation*}
\int_{-P/2}^0\left(K_P'(x-y)+K_P'(x+y)\right)(\varphi(y))^2\dy=0, \quad x\in [x_0,0].
\end{equation*}
Clearly, $\int_{-P/2}^0\left(K_P'(x-y)+K_P'(x+y)\right)\dy=0$, and as
\begin{align*}
K_P'(x-y)+K_P'(x+y) & < 0, \quad -P/2<y<x<0, \\
K_P'(x-y)+K_P'(x+y) & > 0, \quad -P/2<x<y<0,
\end{align*}
we get that $\int_{-P/2}^x\left(K_P'(x-y)+K_P'(x+y)\right)\dy=-\int_{x}^0\left(K_P'(x-y)+K_P'(x+y)\right)\dy$. Hence, by the mean value theorem for integrals,
\begin{align*}
L(\varphi^2)'(x_0)= &\int_{-P/2}^0\left(K_P'(x_0-y)+K_P'(x_0+y)\right)(\varphi(y))^2\dy \\
= & \varphi(c)^2\int_{-P/2}^{x_0}\left(K_P'(x_0-y)+K_P'(x_0+y)\right)\dy \\
& +\mu^2\int_{x_0}^0\left(K_P'(x_0-y)+K_P'(x_0+y)\right)\dy \\
= &\int_{x_0}^0\left(K_P'(x_0-y)+K_P'(x_0+y)\right)\dy(\mu^2-\varphi(c)^2),
\end{align*}
for some $c\in (-P/2,x_0)$. As $-\mu<\varphi<\mu$ on $(-P/2,0)$, we get $(\mu^2-\varphi(c)^2)>0$, which is contradiction unless $\int_{x_0}^0\left(K_P'(x_0-y)+K_P'(x_0+y)\right)\dy=0$. That can only happen if $x_0=0$. This proves part (i).

At any point $x_0$ where $\varphi(x_0)=\mu$, \eqref{eq: difference of solution} reduces to
\begin{equation}
\label{eq: equality at x_0}
(\varphi(x_0)-\varphi(x))^2=3\left((L(\varphi^2)(x_0)-L(\varphi^2)(x)\right).
\end{equation}
From \eqref{eq: equality at x_0}, we get in the real line case that
\begin{align}
(\varphi(0)-\varphi(x))^2 & =\frac{3}{2}\int_\R (2K(y)-K(x+y)-K(x-y))(\varphi(y))^2\dy \nonumber \\
& \leq \frac{3}{2}\int_{|y|<|x|} (2K(y)-K(x+y)-K(x-y))(\varphi(y))^2 \dy \nonumber \\
& \leq \frac{3}{2}\|\varphi\|_{L^\infty(\R)}^2 \int_{|y|<|x|} |2K(y)-K(x+y)-K(x-y)|\dy, \label{eq: lipschitz proof}
\end{align}
where we used that the first integral on the right-hand side is clearly non-negative, while $2K(y)-K(x+y)-K(x-y)<0$ when $|y|\geq |x|$. Indeed, for $|y|>|x|$ we can expand $K(y+x)$ and $K(y-x)$ around $y$ and use the Lagrange remainder to get
\begin{equation*}
2K(y)-K(x+y)-K(x-y)=-\frac{x^2}{2}(K''(\xi_1)+K''(\xi_2))<0,
\end{equation*}
where $\xi_1\in(y,y+x)$, $\xi_2\in(y-x,y)$ and the last inequality follows from the strict convexity of $K$.

Similarly, expanding to one less order, we get
\begin{equation*}
2K(y)-K(x+y)-K(x-y)=x(K'(\xi_1)-K'(\xi_2)).
\end{equation*}
As $K'$ is uniformly bounded, there is a constant $C$ that can be chosen independently of $x$ such that
\begin{equation}
\label{eq: bound on K}
|2K(y)-K(x+y)-K(x-y)|\leq C|x|,
\end{equation}
for all $y\in \R$. Taking the square root on each side of \eqref{eq: lipschitz proof} we then get that
\begin{equation*}
|\varphi(0)-\varphi(x)|\leq C'\|\varphi\|_{L^\infty(\R)}|x|=C'\mu |x|.
\end{equation*}
This proves that $\varphi$ is Lipschitz at $0$. For the periodic kernel, we have that $2K_P(y)-K_P(x+y)-K_P(x-y)<0$ when $|x|\leq |y|\leq P/2-|x|$ (we are only interested in $x$ close to $0$, so we can assume $|x|<P/2-|x|$). In the intervals  $|y|<|x|$ and $P/2-|x|<|y|\leq P/2$, \eqref{eq: bound on K} holds for $K_P$ and we therefore get the same result.

It remains to show the opposite inequality, i.e. that $|\mu-\varphi(x)|\gtrsim |x|$ near $x=0$; in particular this implies that $\varphi\not\in C^1$. As $\varphi$ is smooth on $(-P/2,0)$ and (at least) Lipschitz in $0$, we can use integration by parts for $x\in(-P/2,0)$ to get
\begin{align*}
(\mu-\varphi(x))\varphi'(x)= & \frac{3}{2}L(\varphi^2)'(x) \\
= &\frac{3}{2}\int_{-P/2}^0\left(K_P'(x-y)+K_P'(x+y)\right)(\varphi(y))^2\dy\\
= & 3\int_{-P/2}^0 \left(K_P(x-y)-K_P(x+y)\right)\varphi(y)\varphi'(y)\dy.
\end{align*}
As $\mu-\varphi(x)\leq C'\mu |x|$ for $x\in (-P/2,0)$ as shown above, we divide out $\mu-\varphi(x)$:
\begin{equation*}
\varphi'(x) \geq C\int_{-P/2}^0 \frac{K_P(x-y)-K_P(x+y)}{|x|}\varphi'(y)\varphi(y)\dy,
\end{equation*}
for some constant $C>0$ independent of $x$. Let $x\in (-P/2,0)$. By the mean value theorem,
\begin{equation}
\label{eq: mean value thm}
\frac{|\mu-\varphi(x)|}{|x|}=\varphi'(\xi)\geq  C\int_{-P/2}^0 \frac{K_P(\xi-y)-K_P(\xi+y)}{|\xi|}\varphi'(y)\varphi(y)\dy
\end{equation}
for some $\xi\in(x,0)$. It suffices to show that this is bounded below by a positive constant as $x\nearrow 0$, but while $\varphi'$ is defined for all $x\in(-P/2,0)$, the limit may not exist. We therefore consider the limit infimum. On the other hand, the limit of the integral on the right hand side exists. Indeed, we have that
\begin{equation*}
\lim_{\xi\nearrow 0} \frac{K_P(\xi-y)-K_P(\xi+y)}{|\xi|}=2K_P'(y)
\end{equation*}
This function is non-negative and strictly monotonically increasing on $(-P/2,0)$, and as $\varphi$ is non-decreasing on this interval, we get by Lebesgue's dominated convergence theorem that for any sequence $\lbrace \xi_n\rbrace_n\subset (-P/2,0)$ such that $\xi_n\rightarrow 0$,
\begin{align*}
\lim_{n\rightarrow \infty} & C\int_{-P/2}^0 \frac{K_P(\xi_n-y)-K_P(\xi_n+y)}{|\xi_n|}\varphi'(y)\varphi(y)\dy \\
= &C\int_{-P/2}^0 \lim_{n\rightarrow \infty}\frac{K_P(\xi_n-y)-K_P(\xi_n+y)}{|\xi_n|}\varphi'(y)\varphi(y)\dy \\
\geq & C''\int_{-P/2}^0 \varphi'(y)\varphi(y)\dy \\
= &\frac{C''}{2}(\mu^2-(\varphi(-P/2))^2)>0.
\end{align*}
In particular the limit exists and therefore equals the limit infimum and from \eqref{eq: mean value thm} it follows that for any sequence $\lbrace x_n\rbrace_n\subset (-P/2,0)$, and by symmetry indeed any sequence in $(-P/2,P/2)$, such that $x_n\rightarrow 0$,
\begin{equation*}
\liminf_{n\rightarrow \infty} \frac{|\mu-\varphi(x_n)|}{|x_n|} \gtrsim 1.
\end{equation*}
As the sequence was arbitrary this proves (iii). 

Since $\varphi\in L^\infty(\R)$ is symmetric and $\varphi'\geq 0$, and therefore also $L(\varphi^2)'\geq 0$, on $(-P/2,0)$, we have that for $x<0$
\begin{align*}
\left(L(\varphi^2)\right)'(x)  = & \int_\R K'(x-y)(\varphi(y))^2\dy \\
= & \int_{-\infty}^0 (K'(x-y)+K'(x+y))(\varphi(y))^2\dy \\
\leq & \int_x^0(K'(x-y)+K'(x+y))(\varphi(y))^2\dy \\
\leq & C|x|,
\end{align*}
for some constant $C>0$, where we used that $K$ is completely monotone on $(0,\infty)$ and that the integrand is $L^\infty$. The results above imply that $(\mu-\varphi(x))\geq C'|x|$ for some constant $C'$ independent of $x$ when $\varphi(x)>\frac{\mu}{4}$ and from the equation
\begin{equation*}
(\mu-\varphi(x))\varphi'(x)=3\left(L(\varphi^2)\right)'(x)\leq \min(L(\varphi^2)(0), C|x|),
\end{equation*}
which holds for $x\leq 0$, we then see that $\varphi'$ is uniformly bounded on the closed interval $[-P/2,0]$ and therefore Lipschitz. This proves (ii).
\end{proof}

\begin{remark}[On cuspons]
The equality \eqref{eq: equality at x_0} holds when $\varphi(x_0)=\mu$ for any solution of \eqref{eq: DP}, regardless of the integration constant $a$, and we have therefore shown that any $L^\infty$ solution of \eqref{eq: DP} is at least Lipschitz continuous. We have not yet proved that a solution that touches the line $\mu$ exists, but any that do will be Lipschitz. This means that there are no $L^\infty$ cuspons for the DP equation.
\end{remark}

\section{Global bifurcation}
\label{sec: GB}
In this section we will show that there are non-constant periodic solutions which achieve the maximal height; i.e. periodic peakons. These will be obtained by constructing a curve of even, periodic smooth solutions using "standard" bifurcation theory and showing that in the limit of the curve we get a peakon. 

We therefore fix $\alpha\in (1,2)$ and consider $C_{\text{even}}^\alpha(\Si_P)$, the space of even, real-valued functions on the circle $\Si_P$ of finite circumference $P>0$ that are $\floor{\alpha}$-times differentiable with the $\floor{\alpha}$ derivative being $\alpha-\floor{\alpha}$-H\"older continuous. The main point is to work with regularity strictly higher than Lipschitz, i.e. $\alpha>1$, and avoid integer values of $\alpha$ in order to avoid the Zygmund spaces which do not coincide with $C^\alpha$ when $\alpha\in \Z$ (see the proof of Theorem \ref{thm: regularity I}).

From \cite{Lenells2005tws} we know that there are no periodic peakons when $a=0$ in \eqref{eq: DP}, only a one-parameter family of smooth periodic solutions and a peaked solitary wave, and for $a\in (-\frac{\mu^2}{8},0)$ there are only smooth solutions.  As our final goal is to find a bifurcation curve of periodic solutions that converges to a peaked solution, the case $a\leq 0$ is not relevant and henceforth we will only consider $a>0$.

\begin{remark}
As one can easily check (following the procedure below), for $a=0$ one can do local bifurcation from the curve $(\varphi,\mu)=(\mu/2,\mu)$ of constant solutions only when the period is $\sqrt{2}\pi$, but this curve cannot be extended to a global one. When $a\in (-\frac{\mu^2}{8},0)$ all the results regarding bifurcation below holds for periods $0<P<\sqrt{2}\pi$ and we get global bifurcation curves. However, in this case $\sqrt{-8a}<\mu<\infty$ and the equivalent of Lemma \ref{lem: lower bound mu} does not hold. That is, we cannot preclude that alternative (ii) in Theorem \ref{thm: global bif} occurs by $\mu(s)$ approaching $\sqrt{-8a}$. 
\end{remark}

Fix $a>0$ and let $F:C_{\text{even}}^\alpha(\Si_P)\times \R\rightarrow C_\text{even}^\alpha(\Si_P)$ be the operator defined by
\begin{equation}
\label{eq: F 2}
F(\varphi,\mu)=\mu \varphi-\frac{3}{2}L(\varphi^2)-\frac{1}{2}\varphi^2+a.
\end{equation}
Then $\varphi$ is a solution to \eqref{eq: DP} with wave-speed $\mu$ if and only if $F(\varphi,\mu)=0$. There are two curves of constant solutions $F(\varphi(s),\mu(s))=0$, namely $(\varphi(s),\mu(s))=(\frac{s}{4}\pm\frac{\sqrt{s^2+8a}}{4},s)$ for all $s\in \R$. The negative one, however, is not interesting as \eqref{eq: DP} has no non-positive solutions and therefore no curve of non-trivial solutions intersects it. We therefore take the curve $(\varphi(s),\mu(s))=(\frac{s}{4}+\frac{\sqrt{s^2+8a}}{4},s)$ as our starting point. Set
\begin{equation*}
\lambda(\mu):=\frac{\mu}{4}+\frac{\sqrt{\mu^2+8a}}{4}
\end{equation*}
and define
\begin{equation}
\label{eq: tilde F 2}
\tilde{F}(\phi,\mu)=F(\lambda(\mu)-\phi,\mu)=(\lambda-\mu)\phi+3\lambda L(\phi)-\frac{3}{2}L(\phi^2)-\frac{1}{2}\phi^2.
\end{equation}
Then $\tilde{F}(0,\mu)=0$ for all $\mu\in \R$, and letting
\begin{equation}
\label{eq: def varphi}
\varphi:=\lambda(\mu)-\phi,
\end{equation}
we have that
\begin{equation*}
\tilde{F}(\phi,\mu)=0 \Leftrightarrow F(\varphi,\mu)=0.
\end{equation*}
Hence a curve $(\phi(s),\mu(s))$ along which $\tilde{F}=0$ gives rise to a curve of solutions $(\varphi(s),\mu(s))$ to \eqref{eq: DP}. In the sequel, $\varphi$ will always be defined through \eqref{eq: def varphi}.

Note that
\begin{equation*}
\diff_\phi \tilde{F}[0,\mu]=(\lambda(\mu)-\mu) \id +3\lambda(\mu)L.
\end{equation*}
When $\mu^2>a$ we have that $4\lambda>\mu$ while $\mu>\lambda$, and as $L(\cos(p\cdot)(x)=\frac{\cos(px)}{1+p^2}$ we get that
\begin{equation*}
\ker \diff_\phi \tilde{F}[0,\mu]=\lbrace C\cos\left(\sqrt{\frac{4\lambda-\mu}{\mu-\lambda}}x\right) : C\in \R\rbrace.
\end{equation*}
Restricting to $P$-periodic functions, the kernel is one-dimensional if and only if $\sqrt{\frac{4\lambda-\mu}{\mu-\lambda}}=\frac{2k\pi}{P}$ for some $k\in \N$. Clearly, $\sqrt{\frac{4\lambda(\mu)-\mu}{\mu-\lambda(\mu)}}$ is continuous in $\mu$ for $\mu\in( \sqrt{a},\infty)$, strictly monotone on this interval, bounded below by $\sqrt{2}$, the bound being achieved in the limit as $\mu\rightarrow \infty$, and unbounded above as $\mu^2\searrow a$. This means that for every $P>0$ and each $k\in \N$ such that $\frac{2k\pi}{P}>\sqrt{2}$, there exists a unique $\mu>\sqrt{a}$ such that $\cos\left(\sqrt{\frac{4\lambda-\mu}{\mu-\lambda}}x\right)\in C_{\text{even}}^\alpha(\Si_P)$. When $P\geq \sqrt{2}\pi$, we get that $k>1$.
\begin{theorem}[Local bifurcation]
\label{thm: local bif}
Fix $a>0$ and $P>0$, and let $F$ and $\tilde{F}$ be defined as in \eqref{eq: F 2} and \eqref{eq: tilde F 2}, respectively. Then for each $k\in \N$ such that $\frac{2k\pi}{P}>\sqrt{2}$, there exists a unique $\mu_k\in (\sqrt{a},\infty)$ such that $(0,\mu_k)$ is a bifurcation point for $\tilde{F}$, and hence $(\lambda(\mu_k),\mu_k)$ is a bifurcation point for $F$. That is, there exists $\varepsilon>0$ and an analytic curve
\begin{equation*}
s \mapsto (\varphi(s),\mu(s))\subset C_{\text{even}}^\alpha(\Si_P)\times (\sqrt{a},\infty), \quad |s|<\varepsilon,
\end{equation*}
of nontrivial $P/k$-periodic solutions, where $\mu(0)=\mu_k$ and 
\begin{equation*}
\diff_s\phi(0)=-\diff_s\varphi(0)=\cos\left(\sqrt{\frac{4\lambda(\mu_k)-\mu_k}{\mu_k-\lambda(\mu_k)}}x\right).
\end{equation*}

\end{theorem}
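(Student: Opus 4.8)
The plan is to invoke the analytic local bifurcation theorem of Crandall--Rabinowitz (in the form stated in \cite{Buffoni2003ato}), applied to the map $\tilde F\colon C^\alpha_{\mathrm{even}}(\Si_P)\times\R\to C^\alpha_{\mathrm{even}}(\Si_P)$. The hypotheses to verify are: (a) $\tilde F$ is (real-)analytic as a map between these Banach spaces; (b) $\tilde F(0,\mu)=0$ for all $\mu$, which is immediate from the construction \eqref{eq: tilde F 2}; (c) at the candidate value $\mu_k$ the Fr\'echet derivative $\diff_\phi\tilde F[0,\mu_k]$ is a Fredholm operator of index zero with one-dimensional kernel; and (d) the transversality (non-degeneracy) condition $\diff_\mu\diff_\phi\tilde F[0,\mu_k]\,e_k\notin \operatorname{ran}\diff_\phi\tilde F[0,\mu_k]$, where $e_k=\cos\!\bigl(\tfrac{2k\pi}{P}x\bigr)$ spans the kernel.

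For step (a), $L\colon C^\alpha_{\mathrm{even}}(\Si_P)\to C^{\alpha}_{\mathrm{even}}(\Si_P)$ is bounded (indeed smoothing by two derivatives, as recalled in the proof of Theorem \ref{thm: regularity I}), $\mu\mapsto\lambda(\mu)$ is analytic on $(\sqrt a,\infty)$ since $\mu^2+8a>0$ there, and the only nonlinearity is the quadratic map $\phi\mapsto\phi^2$, which is a bounded bilinear form on the Banach algebra $C^\alpha_{\mathrm{even}}(\Si_P)$; hence $\tilde F$ is a composition of analytic maps and is analytic. For step (c), I would compute, as already done in the excerpt,
\begin{equation*}
\diff_\phi\tilde F[0,\mu]=(\lambda(\mu)-\mu)\,\id+3\lambda(\mu)\,L,
\end{equation*}
which is a compact perturbation of the isomorphism $(\lambda(\mu)-\mu)\id$ (note $\lambda(\mu)-\mu\neq0$ since $\mu>\lambda(\mu)$ on $(\sqrt a,\infty)$), hence Fredholm of index zero. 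Diagonalising in the Fourier basis $\{\cos(\tfrac{2n\pi}{P}x)\}_{n\ge0}$, the $n$-th eigenvalue is $(\lambda-\mu)+3\lambda(1+\tfrac{4n^2\pi^2}{P^2})^{-1}$, and using the monotonicity and range of $\mu\mapsto\sqrt{\tfrac{4\lambda(\mu)-\mu}{\mu-\lambda(\mu)}}$ established just before the theorem, there is exactly one $\mu_k\in(\sqrt a,\infty)$ for which this vanishes at $n=k$ (and then, by strict monotonicity of the eigenvalues in $n$, for no other $n$), giving a one-dimensional kernel spanned by $e_k$.

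The transversality condition (d) is where the real computation lies, and I expect it to be the only genuinely substantive point. Since $\diff_\mu\diff_\phi\tilde F[0,\mu]=\lambda'(\mu)(\id+3L)-\id=(3\lambda'(\mu)-1)\id+3\lambda'(\mu)\,(L-\id)$ acts diagonally as well, one finds $\diff_\mu\diff_\phi\tilde F[0,\mu_k]\,e_k=\bigl(\lambda'(\mu_k)(1+3(1+\tfrac{4k^2\pi^2}{P^2})^{-1})-1\bigr)e_k$; by Fredholm-of-index-zero and self-adjointness of the diagonal operator, $\operatorname{ran}\diff_\phi\tilde F[0,\mu_k]$ is precisely the closed subspace orthogonal (in $L^2(\Si_P)$) to $e_k$, so the condition reduces to checking that the scalar coefficient is nonzero. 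One computes $\lambda'(\mu)=\tfrac14\bigl(1+\tfrac{\mu}{\sqrt{\mu^2+8a}}\bigr)\in(\tfrac14,\tfrac12)$ for $\mu>0$, and using the kernel relation $3\lambda_k(1+\tfrac{4k^2\pi^2}{P^2})^{-1}=\mu_k-\lambda_k$ one can rewrite the coefficient purely in terms of $\mu_k,\lambda_k$ and verify it is strictly positive (roughly, it equals $\lambda'(\mu_k)\cdot\mu_k/\lambda_k-1$ and the factor $\mu_k/\lambda_k$ exceeds $2$ while $\lambda'$ stays above a quantity bounded away from $1/2$ once one uses $a>0$; the strict inequality $4\lambda>\mu$ is what makes this work). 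This furnishes the transversality hypothesis. Having checked (a)--(d), the Crandall--Rabinowitz theorem yields the asserted analytic branch $s\mapsto(\phi(s),\mu(s))$ with $\mu(0)=\mu_k$ and $\diff_s\phi(0)=e_k$; transcribing via \eqref{eq: def varphi} gives the branch $(\varphi(s),\mu(s))$ of nonconstant solutions to \eqref{eq: DP} with $\diff_s\varphi(0)=-e_k$, and the $P/k$-periodicity is inherited from the kernel direction $e_k=\cos(\tfrac{2k\pi}{P}x)$.
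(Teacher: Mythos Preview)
Your overall strategy is the same as the paper's: apply the analytic Crandall--Rabinowitz theorem (Theorem~8.3.1 in \cite{Buffoni2003ato}) after verifying analyticity, the Fredholm property, simplicity of the kernel, and transversality. Your arguments for (a)--(c) are fine, and in fact more explicit than the paper's (the compact-perturbation argument for Fredholm index zero is a clean way to do it).

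The genuine gap is in step (d). Your simplification of the transversality coefficient to $\lambda'(\mu_k)\cdot\mu_k/\lambda_k-1$ via the kernel relation is correct, but the bounds you then quote are wrong. Since $a>0$ one has $\sqrt{\mu^2+8a}>\mu$, hence $\lambda(\mu)>\mu/2$ and therefore $\mu_k/\lambda_k<2$, not $>2$ as you claim; likewise $\lambda'(\mu)=\tfrac14\bigl(1+\mu/\sqrt{\mu^2+8a}\bigr)$ lies strictly \emph{below} $\tfrac12$ on $(\sqrt a,\infty)$, so it cannot ``stay above a quantity bounded away from $1/2$''. With your stated inequalities the argument collapses. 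The correct observation (this is what the paper does) is that on $(\sqrt a,\infty)$ one has $\lambda'(\mu)\in(\tfrac13,\tfrac12)$ while $\lambda(\mu)/\mu\in(\tfrac12,1)$; these intervals are disjoint, so $\lambda'(\mu_k)\neq\lambda_k/\mu_k$ and the coefficient $\lambda'(\mu_k)\,\mu_k/\lambda_k-1$ is nonzero (in fact strictly negative, not positive). Once you replace your sentence with this, (d) goes through.

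One smaller point: the final identification $\diff_s\varphi(0)=-e_k$ does not follow just from $\varphi=\lambda(\mu)-\phi$, since $\diff_s\varphi(0)=\lambda'(\mu_k)\dot\mu(0)-\diff_s\phi(0)$. You need $\dot\mu(0)=0$, which the paper obtains from the bifurcation formula $\dot\mu(0)=-\tfrac12\langle \diff^2_{\phi\phi}\tilde F[0,\mu_k](\phi^*,\phi^*),\phi^*\rangle/\langle \diff^2_{\mu\phi}\tilde F[0,\mu_k]\phi^*,\phi^*\rangle$ together with $\int_{-P/2}^{P/2}\cos^3(\tfrac{2k\pi}{P}x)\,\dx=0$. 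You should either include this, or weaken the conclusion to $\diff_s\phi(0)=e_k$ only.
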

\begin{proof}
It is sufficient to consider $k=1$ and $P<\sqrt{2}\pi$. As shown above, there exists a unique $\mu\in (\sqrt{a},\infty)$ such that $\ker \diff_\phi \tilde{F}[0,\mu]$ is one-dimensional. The space $C_{\text{even}}^\alpha(\Si_P)$ has basis $\lbrace \cos(\frac{2\pi}{P} k\cdot) : k\in \N\rbrace$ and by straightforward calculation one finds that $\diff_\phi \tilde{F}[0,\mu]$ maps the basis element $k=1$ to zero while all others are preserved modulo a constant. Thus $\text{codim}\, \text{range} \,\diff_\phi \tilde{F}[0,\mu]=1$ and $\diff_\phi \tilde{F}[0,\mu]$ is Friedholm of index zero. The result now follows from Theorem 8.3.1 in \cite{Buffoni2003ato}. Note that $\diff_s\phi(0)=-\diff_s\varphi(0)$ because $D_s \mu(0)=\dot{\mu}(0)=0$ (see \eqref{eq: dot mu} below).
\end{proof}
We want to extend these bifurcation curves globally. Let 
\begin{equation*}
U:=\lbrace (\varphi,\mu)\in C_{\text{even}}^\alpha(\Si_P)\times (\sqrt{a},\infty) : \varphi<\mu \rbrace,
\end{equation*}
and
\begin{equation*}
S:=\lbrace (\varphi,\mu)\in U : F(\varphi,\mu)=0\rbrace.
\end{equation*}
In order to establish Theorem \ref{thm: global bif} below; that is, to extend the curves globally, it suffices to establish that $\ddot{\mu}(0)\neq 0$ and the following Lemma:
\begin{lemma}
\label{lem: compact subsets}
Whenever $(\varphi,\mu)\in S$ the function $\varphi$ is smooth, and bounded and closed subsets of $S$ are compact in $C_{\text{even}}^\alpha(\Si_P)\times (\sqrt{a},\infty)$.
\end{lemma}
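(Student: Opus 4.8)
### Proof proposal

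The plan is to establish the two assertions of Lemma~\ref{lem: compact subsets} separately, starting with smoothness. Given $(\varphi,\mu)\in S$ we have $\varphi<\mu$ uniformly on the compact circle $\Si_P$ and $\varphi\in C^\alpha_{\text{even}}(\Si_P)\subset L^\infty(\Si_P)$, so Theorem~\ref{thm: regularity I}(i) (or rather its periodic analogue, which follows verbatim since $K_P$ is also an $S^{-2}$-type kernel) applies: the map $\varphi\mapsto L(\varphi^2)$ gains two derivatives in the Zygmund/Besov scale, and the map $L(\varphi^2)\mapsto \mu-\sqrt{\mu^2+2a-3L(\varphi^2)}=\varphi$ preserves regularity because $\mu^2+2a-3L(\varphi^2)$ is bounded away from $0$ on the compact circle (here $\varphi<\mu$ gives $3L(\varphi^2)<\mu^2+2a$). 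Iterating the composition yields $\varphi\in C^\infty(\Si_P)$, which is the first claim.

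For compactness, let $A\subset S$ be bounded and closed. The goal is to show that any sequence $(\varphi_n,\mu_n)\subset A$ has a subsequence converging in $C^\alpha_{\text{even}}(\Si_P)\times(\sqrt a,\infty)$ to a point of $A$. Since $A$ is bounded, $\{\mu_n\}$ lies in a compact interval $[\sqrt a,\mu_\ast]$... but we must be careful: the base space is $(\sqrt a,\infty)$, which is not compact at the left endpoint, so I would first argue that $\mu_n$ stays bounded away from $\sqrt a$. Boundedness of $A$ in $C^\alpha\times\R$ only controls $\|\varphi_n\|_{C^\alpha}$ and $|\mu_n|$ from above; to bootstrap we instead use that $\varphi_n<\mu_n$ together with $F(\varphi_n,\mu_n)=0$ to get a uniform higher bound: from $(\mu_n-\varphi_n)^2=\mu_n^2+2a-3L(\varphi_n^2)$ and the boundedness of $\|\varphi_n\|_{L^\infty}$, the operator $L$ being bounded $L^\infty\to C^1\hookrightarrow C^\alpha$ for $\alpha<2$, we obtain $\|L(\varphi_n^2)\|_{C^\alpha}\lesssim \|\varphi_n\|_{L^\infty}^2$, hence $\|\varphi_n\|_{C^\alpha}\lesssim 1+\|\varphi_n\|_{L^\infty}^2$ — so in fact the $C^\alpha$-bound is automatic from the $L^\infty$-bound, and the relevant a priori bound is just $\sup_n\|\varphi_n\|_{L^\infty}<\infty$, which holds on $A$. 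This is also the mechanism for the higher-order Schauder-type estimates needed for precompactness: iterating as in the smoothness argument, $\|\varphi_n\|_{C^{1,\beta}}$ for some $\beta>\alpha-1$ is bounded uniformly, whence by Arzelà–Ascoli a subsequence converges in $C^\alpha$. Extract also a convergent subsequence of $\{\mu_n\}$ in $[\,\liminf\mu_n,\mu_\ast]$.

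The limit $(\varphi_\infty,\mu_\infty)$ satisfies $F(\varphi_\infty,\mu_\infty)=0$ and $\varphi_\infty\le\mu_\infty$ by passing to the limit in the $C^\alpha$-convergent sequence (all terms of $F$ are continuous on $C^\alpha$). To conclude $(\varphi_\infty,\mu_\infty)\in A\subset U$ we need the \emph{strict} inequality $\varphi_\infty<\mu_\infty$ and $\mu_\infty>\sqrt a$. The first follows from Theorem~\ref{thm: regularity II}: if $\varphi_\infty(x_0)=\mu_\infty$ at some point, then (for the even, non-constant case which is the one relevant to the bifurcation curve, constants being handled directly) $\varphi_\infty$ would be a peaked solution, hence not in $C^\alpha$ with $\alpha>1$ by part~(iii) — contradicting $\varphi_\infty\in C^\alpha_{\text{even}}(\Si_P)$; and if $\varphi_\infty$ is constant, $\varphi_\infty\equiv\lambda(\mu_\infty)<\mu_\infty$ directly since $\mu_\infty>\sqrt a\geq\sqrt a$ forces $\lambda<\mu$. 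For $\mu_\infty>\sqrt a$: since $a>0$, Theorem~\ref{thm: solutions to DP} shows non-constant solutions require $\mu^2>a$ strictly (the threshold $a=\mu^2$ admits only constants, and $\mu^2<a$ i.e. $a>\mu^2$ admits only constants as well), so the set of admissible $\mu$ for non-constant solutions is already contained in $(\sqrt a,\infty)$ with the endpoint excluded; combined with closedness of $A$ inside $U$, the limit stays in $A$.

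The main obstacle is the compactness direction, specifically verifying that the bound defining "bounded subset of $S$" really does yield uniform higher-regularity bounds strong enough for Arzelà–Ascoli in $C^\alpha$ — i.e. making the bootstrapping of Theorem~\ref{thm: regularity I} quantitative and uniform over the subset, and checking the degenerate endpoint behaviour at $\mu=\sqrt a$ does not spoil closedness in $U$. Once the a priori $L^\infty$-bound is seen to propagate to a $C^{1,\beta}$-bound uniformly, the rest is the standard Rellich-type argument plus the non-concentration input from Theorem~\ref{thm: regularity II} to keep the limit strictly below $\mu$.
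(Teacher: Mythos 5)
Your proposal takes essentially the same route as the paper: smoothness by bootstrapping the two-derivative gain of $L$ through the identity $\varphi=\mu-\sqrt{\mu^2+2a-3L(\varphi^2)}$, and compactness by upgrading a $C^\alpha$-bounded set of solutions to a set bounded in a strictly higher H\"older space and then invoking compact embedding on the circle. The paper is more economical at the end: once a subsequence converges in $C^\alpha_{\text{even}}(\Si_P)\times\R$, closedness of the bounded closed set $E$ immediately places the limit in $E\subset S\subset U$, so the conditions $\varphi_\infty<\mu_\infty$ and $\mu_\infty>\sqrt a$ are inherited automatically. Your extra paragraph re-deriving these two inequalities from Theorem~\ref{thm: regularity II} and Theorem~\ref{thm: solutions to DP} is therefore logically redundant (though it does illuminate why one should expect the compactness hypothesis to hold away from peakons). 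Two remarks on details. First, the chain ``$L:L^\infty\to C^1\hookrightarrow C^\alpha$ for $\alpha<2$'' is not correct as written, since $C^1\not\hookrightarrow C^\alpha$ for $\alpha\in(1,2)$; what is true, and what the paper uses, is $L:L^\infty\to\mathcal{C}^2$ (Zygmund), and $\mathcal{C}^2\hookrightarrow C^\alpha$ for $\alpha<2$. Second, you are right to flag the uniformity of the higher-order bound as the crux: the passage from ``$L(\varphi^2)$ uniformly bounded in $C^{\alpha+2}$'' to ``$\varphi$ uniformly bounded in $C^{\alpha+2}$ (or $C^{1,\beta}$)'' goes through the analytic map $x\mapsto\sqrt{x}$, whose H\"older norms degrade as its argument approaches $0$, i.e.\ as $\inf(\mu-\varphi)\to 0$ over $E$. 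You acknowledge this as the main obstacle but do not resolve it; the paper's proof also passes over it in one sentence (``$\{\varphi:(\varphi,\mu)\in E\}\subset C^{\alpha+2}_{\text{even}}(\Si_P)$ is a bounded subset''), so on this point the two arguments are at the same level of detail.
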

\begin{proof}
The smoothness part was proved in Theorem \ref{thm: regularity I}. Recall from the proof of that theorem that $(\varphi,\mu)\in S$ implies $3L(\varphi^2)<\mu^2+2a$ and hence
\begin{equation*}
\varphi=\mu-\sqrt{\mu^2+2a-3L(\varphi^2)}\in C_{\text{even}}^{\alpha+2}(\Si_P),
\end{equation*}
as $L:C^\alpha\rightarrow C^{\alpha+2}$ and $\sqrt{x}$ is real analytic for $x>0$. Let $E\subset S$ be bounded and closed in the $C_{\text{even}}^\alpha(\Si_P)\times\R$ topology. Then, as shown above, $\lbrace \varphi : (\varphi,\mu)\in E\rbrace\subset C_{\text{even}}^{\alpha+2}(\Si_P)$ is a bounded subset. Bounded subsets of $C_{\text{even}}^{\alpha+2}(\Si_P)$ are pre-compact in $C_{\text{even}}^{\alpha}(\Si_P)$, hence any sequence $\lbrace (\varphi_n,\mu_n)\rbrace_n\subset E$ has a subsequence that converges in the $C_{\text{even}}^\alpha(\Si_P)\times\R$ topology. As $E$ is closed, the limit must itself lie in $E$, proving that $E$ is compact.
\end{proof}

In order to establish the bifurcation formulas we will apply the Lyapunov-Schmidt reduction \cite{Kielhofer2012bta}. For simplicity we consider the case $P<\sqrt{2}\pi$ and $k=1$. Let $\mu^*:=\mu_1$ and
\begin{equation}
\phi^*(x):=\cos\left( \frac{2\pi}{P}x\right),
\end{equation}
and let furthermore
\begin{equation*}
M:=\lbrace \sum_{k\neq 1} a_k\cos\left( \frac{2\pi kx}{P}\right)\in C_\text{even}^\alpha (\Si_P)\rbrace,
\end{equation*}
and
\begin{equation*}
N:=\ker \diff_\phi \tilde{F}[0,\mu^*]=\text{span}(\phi^*).
\end{equation*}
Then $C_{\text{even}}^\alpha (\Si_P)=M\oplus N$ and we can use the canonical embedding $C^\alpha(\Si_P)\hookrightarrow L^2(\Si_P)$ to define a continuous projection
\begin{equation}
\Pi \phi=\langle \phi,\phi^*\rangle_{L^2(\Si_P)} \phi^*,
\end{equation}
where $\langle u,v\rangle_{L^2(\Si_P)}=\frac{2}{P}\int_{-P/2}^{P/2}uv\dx$.

\begin{theorem}[Lyapunov-Schmidt reduction \cite{Kielhofer2012bta}]
There exists a neighbourhood $O\times Y\subset U$ around $(0,\mu^*)$ in which the problem
\begin{equation}
\label{eq: inf dim}
\tilde{F}(\phi,\mu)=0
\end{equation}
is equivalent to
\begin{equation}
\label{eq: finite dim}
\Phi(\varepsilon \phi^*,\mu):= \Pi \tilde{F}(\varepsilon\phi^* +\psi(\varepsilon\phi^*,\mu),\mu)=0
\end{equation}
for functions $\psi\in C^\infty(O_N\times Y,M)$, $\Phi\in C^\infty(O_N\times Y,N)$, and $O_N\subset N$ an open neighbourhood of the zero function in $N$. One has $\Phi(0,\mu^*)=0$, $\psi(0,\mu^*)=0$, $\diff_\phi \psi(0,\mu^*)=0$, and solving the finite dimensional problem \eqref{eq: finite dim} provides a solution $\phi=\varepsilon\phi^*+\psi(\varepsilon \phi^*,\mu)$ to the infinite dimensional problem \eqref{eq: inf dim}.
\end{theorem}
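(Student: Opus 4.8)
The plan is to obtain the reduction as a textbook application of the (analytic) implicit function theorem to the $M$-component of \eqref{eq: inf dim}, exploiting that the splitting $C_{\text{even}}^\alpha(\Si_P)=N\oplus M$ resolves both the domain of $\diff_\phi\tilde F[0,\mu^*]$ along its kernel and the target along its range. Recall from the proof of Theorem~\ref{thm: local bif} that $\diff_\phi\tilde F[0,\mu^*]=(\lambda(\mu^*)-\mu^*)\id+3\lambda(\mu^*)L$ annihilates $\phi^*=\cos(\tfrac{2\pi}{P}\cdot)$ and carries every other Fourier mode $\cos(\tfrac{2\pi k}{P}\cdot)$, $k\neq1$, to a nonzero multiple of itself; hence its restriction $A:=\diff_\phi\tilde F[0,\mu^*]\big|_M$ is a bounded bijection of $M$ (injective, and Fredholm of index zero by compactness of $L$ on $C^\alpha(\Si_P)$, hence boundedly invertible), while $\id-\Pi$ acts as the identity on $M$ and kills $N$.

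First I would record that $\tilde F$ from \eqref{eq: tilde F 2} is real-analytic from a neighbourhood of $(0,\mu^*)$ in $C_{\text{even}}^\alpha(\Si_P)\times\R$ into $C_{\text{even}}^\alpha(\Si_P)$: the term $3\lambda(\mu)L\phi$ is bounded linear in $\phi$ (indeed $L\colon C^\alpha\to C^{\alpha+2}$) and analytic in $\mu$ through $\lambda(\mu)=\tfrac14(\mu+\sqrt{\mu^2+8a})$, while $\phi\mapsto\phi^2$ and $\phi\mapsto L(\phi^2)$ are analytic superposition operators since $C^\alpha(\Si_P)$ is a Banach algebra. I would then set
\begin{equation*}
G\colon N\times M\times\R\to M,\qquad G(v,w,\mu):=(\id-\Pi)\,\tilde F(v+w,\mu),
\end{equation*}
which satisfies $G(0,0,\mu^*)=0$ and $\diff_w G(0,0,\mu^*)=(\id-\Pi)\,\diff_\phi\tilde F[0,\mu^*]\big|_M=A$, an isomorphism of $M$. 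The implicit function theorem then yields open neighbourhoods $O_N\subset N$ of $0$ and $Y\subset(\sqrt a,\infty)$ of $\mu^*$ and a unique analytic map $\psi\colon O_N\times Y\to M$ with $\psi(0,\mu^*)=0$ and $G\big(v,\psi(v,\mu),\mu\big)\equiv0$.

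To turn this into the asserted equivalence I would choose $O\ni0$ in $C_{\text{even}}^\alpha(\Si_P)$ small enough that every $\phi\in O$ splits as $\phi=v+w$ with $v=\Pi\phi\in O_N$ and $w\in M$ lying in the ball on which $w=\psi(v,\mu)$ is the only zero of $(\id-\Pi)\tilde F(v+\cdot,\mu)$, and moreover so that the corresponding $\varphi=\lambda(\mu)-\phi$ satisfies $\varphi<\mu$ for $\mu\in Y$ — automatic near $\phi=0$ since $\lambda(\mu)-\mu<0$ for $\mu^2>a$ — so that $O\times Y\subset U$. For $(\phi,\mu)\in O\times Y$, $\tilde F(\phi,\mu)=0$ is equivalent to the pair $(\id-\Pi)\tilde F(\phi,\mu)=0$ and $\Pi\tilde F(\phi,\mu)=0$; with $v=\Pi\phi$, $w=\phi-v$, the first equation is $G(v,w,\mu)=0$, forcing $w=\psi(v,\mu)$, and the problem collapses to $\Phi(v,\mu):=\Pi\tilde F\big(v+\psi(v,\mu),\mu\big)=0$ on $O_N\times Y$. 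Writing $v=\varepsilon\phi^*$ this is precisely \eqref{eq: finite dim}, and conversely any zero $v$ of $\Phi$ gives a solution $\phi=v+\psi(v,\mu)$ of \eqref{eq: inf dim}. Smoothness ($C^\infty$, in fact analytic) of $\psi$, and hence of $\Phi$, is inherited from $\tilde F$ and $\Pi$; $\psi(0,\mu^*)=0$ and $\Phi(0,\mu^*)=\Pi\tilde F(0,\mu^*)=0$ are immediate; and differentiating $G\big(v,\psi(v,\mu^*),\mu^*\big)\equiv0$ in $v$ at $0$ gives, for $\delta\in N$, $A\,\diff_\phi\psi(0,\mu^*)\delta=-(\id-\Pi)\diff_\phi\tilde F[0,\mu^*]\delta=0$ because $\delta\in N=\ker\diff_\phi\tilde F[0,\mu^*]$, so $\diff_\phi\psi(0,\mu^*)=0$ by invertibility of $A$.

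I expect the main obstacle to be organisational rather than analytic: shrinking $O_N$, $Y$ and $O$ compatibly so that the direct-sum decomposition $\phi=\Pi\phi+(\id-\Pi)\phi$, the uniqueness clause of the implicit function theorem, and the constraint $O\times Y\subset U$ all hold at once. The two substantive ingredients — analyticity of $\phi\mapsto L(\phi^2)$ on $C^\alpha(\Si_P)$ and invertibility of $A$ on $M$ — are standard and already contained in Theorem~\ref{thm: local bif} (the Fredholm-index-zero computation there is exactly the statement that $A$ is an isomorphism of $M$), so no estimate beyond those of Section~\ref{sec: D-P} is needed; the argument is a careful transcription of the Lyapunov--Schmidt scheme of \cite{Kielhofer2012bta} to the present operator $\tilde F$.
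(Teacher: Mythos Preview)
The paper does not prove this statement at all: it is quoted verbatim as a black-box result from \cite{Kielhofer2012bta}, and the surrounding text only uses its conclusions to compute $\dot\mu(0)$ and $\ddot\mu(0)$. Your write-up is a correct and complete transcription of the standard Lyapunov--Schmidt argument (implicit function theorem applied to the $M$-component, using that $\diff_\phi\tilde F[0,\mu^*]$ is Fredholm of index zero with kernel $N$ and range $M$), so there is nothing to compare---you have simply supplied the textbook proof the paper chose to cite rather than reproduce.
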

We want to show that $\mu(\varepsilon)$ is not constant around $0$. We calculate
\begin{align*}
\diff_{\phi\phi}^2 \tilde{F}[0,\mu^*](\phi^*,\phi^*) & =-(\phi^*)^2-3L((\phi^*)^2), \\
\diff_{\mu\phi}^2\tilde{F}[0,\mu^*]\phi^* & =(\lambda'(\mu^*)-1)\phi^*+3\lambda'(\mu^*)L(\phi^*).
\end{align*}
As  $L(\cos(p\cdot))(x)=\frac{1}{1+p^2}\cos(px)$ for $p\neq 0$, we get that
\begin{equation*}
\diff_{\mu\phi}^2\tilde{F}[0,\mu^*]\phi^*=\left( \lambda'(\mu^*)(1+\frac{3}{1+(2\pi/P)^2})-1\right)\phi^*.
\end{equation*}
By choice, $\sqrt{\frac{4\lambda(\mu^*)-\mu^*}{\mu^*-\lambda(\mu^*)}}=\frac{2\pi}{P}$, so that the coefficient of $\phi^*$ above is zero if and only if
\begin{equation*}
\lambda'(\mu^*)=\frac{\lambda(\mu^*)}{\mu^*}.
\end{equation*}
This is impossible, as the left-hand side lies in $(\frac{1}{3},\frac{1}{2})$ when $\mu^*\in(\sqrt{a},\infty)$, while the right-hand side lies in $(\frac{1}{2},1)$.

Using bifurcation formulas (see e.g. section I.6 in \cite{Kielhofer2012bta}), we readily calculate $\dot{\mu}(0)$:
\begin{equation}
\label{eq: dot mu}
\dot{\mu}(0)=-\frac{1}{2}\frac{\langle \diff_{\phi\phi}^2 \tilde{F}[0,\mu^*](\phi^*,\phi^*),\phi^*\rangle_{L^2(\Si_P)} }{\langle \diff_{\mu\phi}^2\tilde{F}[0,\mu^*]\phi^*,\phi^* \rangle_{L^2(\Si_P)} }=0,
\end{equation}
as $\int_{-P/2}^{P/2} \cos^3(\frac{2\pi}{P}x)\dx=0$.
When $\dot{\mu}(0)=0$, one has that (\cite{Kielhofer2012bta})
\begin{equation*}
\ddot{\mu}(0)=-\frac{1}{3}\frac{\langle \diff_{\phi\phi\phi}^3 \Phi[0,\mu^*](\phi^*,\phi^*,\phi^*),\phi^*\rangle_{L^2(\Si_P)}}{\langle \diff_{\mu\phi}^2\tilde{F}[0,\mu^*]\phi^*,\phi^* \rangle_{L^2(\Si_P)} }.
\end{equation*}
The denominator equals $\left( \lambda'(\mu^*)(1+\frac{1}{1+(2\pi/P)^2})-1\right)\neq 0$. Using that $\tilde{F}$ is quadratic in $\phi$, one can calculate that
\begin{align*}
& \diff_{\phi\phi\phi}^3 \Phi[\phi,\mu](\phi^*,\phi^*,\phi^*) \\
& = 3\,\Pi\, \diff_{\phi\phi}^2 \tilde{F}[\phi+\psi(\phi,\mu),\mu](\phi^*+\diff_\phi \psi[\phi,\mu]\phi^*,\diff_{\phi\phi}^2 \psi[\phi,\mu](\phi^*,\phi^*)) \\
& \quad + \Pi\, \diff_\phi \tilde{F}[\phi+\psi(\phi,\mu),\mu] \diff_{\phi\phi\phi}^3 \psi[\phi,\mu](\phi^*,\phi^*,\phi^*).
\end{align*}
As $N=\ker \diff_\phi\tilde{F}[0,\mu^*]$, we get that the projection $\Pi\, \diff_\phi\tilde{F}[0,\mu^*]=0$. Using that $\psi(0,\mu^*)=\diff_\phi \psi[0,\mu^*]=0$ and the expression for $\diff_{\phi\phi}^2 \tilde{F}[0,\mu^*]$ above, we find that
\begin{align}
\diff_{\phi\phi\phi}^3 & \Phi[0,\mu^*](\phi^*,\phi^*,\phi^*) \nonumber \\
&= -\Pi\, \left(\phi^* \diff_{\phi\phi}^2 \psi[0,\mu^*](\phi^*,\phi^*)+3L(\phi^* \diff_{\phi\phi}^2 \psi[0,\mu^*](\phi^*,\phi^*))\right). \label{eq: third derivative varphi}
\end{align}
We can rewrite $\diff_{\phi\phi}^2 \psi[0,\mu^*](\phi^*,\phi^*)$ as
\begin{align*}
\diff_{\phi\phi}^2  \psi[0,\mu^*](\phi^*,\phi^*) & =-\left( \diff_\phi \tilde{F}[0,\mu^*]\right)^{-1}(\id-\Pi)\diff_{\phi\phi}^2 \tilde{F}[0,\mu^*](\phi^*,\phi^*) \\
& =\left( \diff_\phi \tilde{F}[0,\mu^*]\right)^{-1}\left( (\phi^*)^2+3L((\phi^*)^2)\right) \\
&=\left( \diff_\phi \tilde{F}[0,\mu^*]\right)^{-1}\left( 2+(\frac{1}{2}+\frac{3 P^2}{16\pi^2+P^2})\cos\left( \frac{4\pi}{P}x\right)\right) \\
& =\frac{2}{\lambda(\mu^*)-\mu^*} \\
&\quad +\frac{16\pi^2+7P^2}{2((4\lambda(\mu^*)-\mu^*)P^2+16\pi^2(\lambda(\mu^*)-\mu^*)}\cos\left( \frac{4\pi}{P}x\right),
\end{align*}
where we used that $L(\cos(p\cdot))(x)=\frac{1}{1+p^2}\cos(px)$ for $p\neq 0$. Multiplying with $\phi^*(x)=\cos\left(\frac{2\pi}{P}x\right)$ and using double and triple angle formulas, we get
\begin{align*}
\frac{2\cos(2\pi x/P)}{\lambda(\mu^*)-\mu^*} & +\frac{1}{2}\frac{16\pi^2+7P^2}{2((4\lambda(\mu^*)-\mu^*)P^2+16\pi^2(\lambda(\mu^*)-\mu^*)}\cos\left(\frac{2\pi}{P}x\right)\\
& +\frac{1}{2}\frac{16\pi^2+7P^2}{2((4\lambda(\mu^*)-\mu^*)P^2+16\pi^2(\lambda(\mu^*)-\mu^*)}\cos\left(\frac{6\pi}{P}x\right).
\end{align*}
Denoting by $C$ be the coefficient of $\cos\left( \frac{2\pi}{P}x\right)=\phi^*(x)$ in the above expression, we see from \eqref{eq: third derivative varphi} that
\begin{equation*}
\diff_{\phi\phi\phi}^3  \Phi[0,\mu^*](\phi^*,\phi^*,\phi^*)=-C\left(1+\frac{3P^2}{P^2+4\pi^2}\right)\phi^*.
\end{equation*}
Hence $\ddot{\mu}(0)\neq 0$ and $\dot{\mu}\not \equiv 0$ on $(-\varepsilon,\varepsilon)$. Lemma \ref{lem: compact subsets} and the calculations above show that the conditions of Theorem 9.1.1 in \cite{Buffoni2003ato} are fulfilled and we have the following result:

\begin{theorem}[Global bifurcation]
\label{thm: global bif}
The local bifurcation curves $s \mapsto (\varphi(s),\mu(s))$ of solutions to the Degasperis-Procesi equation from Theorem \ref{thm: local bif} extend to global continuous curves $\mathfrak{R}$ of solutions $\R_{\geq 0}\rightarrow S$. One of the following alternatives hold:
\begin{itemize}
\item[(i)] $\|(\varphi(s),\mu(s))\|_{C^\alpha(\Si_P)\times \R}\rightarrow \infty$ as $s\rightarrow \infty$.
\item[(ii)] $(\varphi(s),\mu(s))$ approaches the boundary of $U$ as $s\rightarrow \infty$.
\item[(iii)] The function $s \mapsto (\varphi(s),\mu(s))$ is (finitely) periodic.
\end{itemize}
\end{theorem}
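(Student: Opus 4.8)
The plan is to obtain Theorem \ref{thm: global bif} as a direct application of the analytic global bifurcation theorem, Theorem 9.1.1 in \cite{Buffoni2003ato}, to the real-analytic operator $\tilde{F}$ of \eqref{eq: tilde F 2} on the open set $U$ (equivalently, via the affine substitution \eqref{eq: def varphi}, to $F$). That theorem produces precisely the trichotomy in the statement — blow-up of the norm, approach to $\partial U$, or a closed loop — once four ingredients are verified: (a) $\tilde{F}$ is real-analytic on $C_{\text{even}}^\alpha(\Si_P)\times(\sqrt{a},\infty)$; (b) the partial derivative $\diff_\phi \tilde{F}[\phi,\mu]$ is a Fredholm operator of index zero at every point of $S$; (c) closed bounded subsets of $S$ are compact in $C_{\text{even}}^\alpha(\Si_P)\times(\sqrt{a},\infty)$; and (d) the local curve of Theorem \ref{thm: local bif} is a nondegenerate analytic branch emanating from $(0,\mu^*)$, meaning that $\ker \diff_\phi \tilde{F}[0,\mu^*]$ is one-dimensional and the branch is a genuine bifurcation (not contained in the trivial solution set).

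Ingredient (a) is immediate: $\tilde{F}(\phi,\mu)=(\lambda(\mu)-\mu)\phi+3\lambda(\mu)L\phi-\tfrac{3}{2}L(\phi^2)-\tfrac{1}{2}\phi^2$ is polynomial in $\phi$ with coefficients real-analytic in $\mu$ on $(\sqrt{a},\infty)$ (since $\lambda(\mu)=\tfrac14(\mu+\sqrt{\mu^2+8a})$ is), and $L$ is a bounded linear operator on each $C^\alpha(\Si_P)$. For (b), I would compute the linearization of $F$ at $(\varphi,\mu)\in S$,
\[
\diff_\varphi F[\varphi,\mu]h=(\mu-\varphi)h-3L(\varphi h),
\]
and observe that on $U$ the factor $\mu-\varphi$ is a strictly positive function, in fact lying in $C_{\text{even}}^{\alpha+2}(\Si_P)$ by the regularity statement Theorem \ref{thm: regularity I}, and bounded below on the compact circle $\Si_P$; hence multiplication by $\mu-\varphi$ is an isomorphism of $C_{\text{even}}^\alpha(\Si_P)$. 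Since $L:C^\alpha(\Si_P)\to C^{\alpha+2}(\Si_P)$, the map $h\mapsto 3L(\varphi h)$ is compact on $C^\alpha(\Si_P)$, so $\diff_\varphi F[\varphi,\mu]$ is a compact perturbation of an isomorphism and therefore Fredholm of index zero; the same conclusion transfers to $\diff_\phi \tilde{F}[\phi,\mu]$ through the change of variables.

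Ingredient (c) is exactly Lemma \ref{lem: compact subsets}. For (d), the one-dimensionality of $\ker \diff_\phi \tilde{F}[0,\mu^*]$ was established in the discussion preceding Theorem \ref{thm: local bif}, and the Lyapunov--Schmidt reduction, together with the computations $\dot{\mu}(0)=0$ and $\ddot{\mu}(0)\neq 0$ already carried out, shows that near $\varepsilon=0$ the reduced equation $\Phi(\varepsilon\phi^*,\mu)=0$ has the nontrivial analytic solution branch of Theorem \ref{thm: local bif} along which $\mu$ is not identically $\mu^*$; in particular this branch is not contained in $\{\phi=0\}$, so the bifurcation is genuine. With (a)--(d) in hand, Theorem 9.1.1 of \cite{Buffoni2003ato} applies and yields the continuous (indeed locally analytic up to reparametrization) global curve $\mathfrak{R}\colon\R_{\geq0}\to S$ extending the local branch and satisfying one of the alternatives (i)--(iii).

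The step demanding the most care is (b): one must ensure that multiplication by $\mu-\varphi$ is genuinely \emph{invertible} on the H\"older space, not merely injective. This is exactly where restricting to the open set $U$ matters — on $U$ one has $\varphi<\mu$ strictly, so $\mu-\varphi$ is bounded away from $0$ uniformly on the compact period circle — and it also clarifies why ``approach to $\partial U$'' in alternative (ii) is the structurally natural obstruction built into the global theorem, with the hard subsequent task (ruling out (ii) and (iii) for small periods) left to the lemmas that follow.
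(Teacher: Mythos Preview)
Your proposal is correct and follows essentially the same approach as the paper: both invoke Theorem 9.1.1 of \cite{Buffoni2003ato} after verifying its hypotheses via Lemma \ref{lem: compact subsets} and the $\ddot{\mu}(0)\neq 0$ computation. You are somewhat more explicit than the paper in spelling out the analyticity of $\tilde{F}$ and, in particular, the Fredholm-of-index-zero property of $\diff_\varphi F$ at \emph{every} point of $S$ (via the compact-perturbation-of-isomorphism argument using that $\mu-\varphi$ is bounded away from zero on $U$), a hypothesis the paper leaves implicit.
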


\begin{theorem}
\label{thm: alternative (iii)}
Alternative (iii) in Theorem \ref{thm: global bif} cannot occur.
\end{theorem}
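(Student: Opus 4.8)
The plan is to argue by contradiction. Suppose $\mathfrak{R}$ is $T$-periodic. By Theorem 9.1.1 in \cite{Buffoni2003ato} the curve then has no self-intersections other than the periodicity, so it is a simple closed loop with $\mathfrak{R}(0)=\mathfrak{R}(T)$ equal to the bifurcation point $(\lambda(\mu^*),\mu^*)$ and $\mathfrak{R}$ injective on $[0,T)$. By the Crandall--Rabinowitz description of the solution set near $(\lambda(\mu^*),\mu^*)$ (cf. \cite{Kielhofer2012bta} and the construction behind Theorem \ref{thm: local bif}) there is a neighbourhood in which $\{F=0\}$ is exactly the trivial branch $\{(\lambda(\mu),\mu)\}$ together with the analytic arc $\sigma\mapsto(\varphi(\sigma),\mu(\sigma))$, $\varphi(\sigma)=\lambda(\mu^*)-\sigma\phi^*+O(\sigma^2)$ in $C^\alpha(\Si_P)$ with $\phi^*(x)=\cos(2\pi x/P)$, the two meeting only at $\sigma=0$. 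The punctured arc splits into $\mathcal{C}^\pm$ ($\pm\sigma>0$); since $\varphi(\sigma)$ is even and $P$-periodic, $\varphi(\sigma)'$ vanishes at $0$ and $\pm P/2$, and the expansion then forces the solutions on $\mathcal{C}^+$ to be strictly decreasing on $(-P/2,0)$ and those on $\mathcal{C}^-$ strictly increasing there. Because $\diff_s\varphi(0)=-\phi^*$, the branch leaves $(\lambda(\mu^*),\mu^*)$ into $\mathcal{C}^+$, i.e. $\mathfrak{R}((0,\delta))\subset\mathcal{C}^+$ for small $\delta>0$.

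The heart of the proof is a nodal-pattern preservation statement: \emph{for every $s>0$ the solution $\varphi(s)$ is strictly decreasing on $(-P/2,0)$}. I would prove this by a connectedness argument for $A:=\{s\ge0:\varphi(s)\text{ is non-increasing on }(-P/2,0)\}$. It is non-empty, since $[0,\delta)\subset A$ by the previous paragraph. It is closed, because $\alpha>1$ gives $C^\alpha_{\text{even}}(\Si_P)\hookrightarrow C^1$, so $\varphi(s_n)'\to\varphi(s)'$ uniformly along the branch. For openness, fix $s_0>0$ in $A$; then $\varphi(s_0)$ is non-constant (it is a nontrivial, hence non-constant, solution), lies below $\mu(s_0)$, and is smooth by Theorem \ref{thm: regularity I}, so applying Theorem \ref{thm: strictly increasing} to the half-period translate $\varphi(s_0)(\cdot+P/2)$ — again an even $P$-periodic solution, now non-decreasing on $(-P/2,0)$ — shows that $\varphi(s_0)$ is \emph{strictly} decreasing on $(-P/2,0)$ with $\varphi(s_0)''(0)>0$ and $\varphi(s_0)''(\pm P/2)<0$. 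As in the proof of Lemma \ref{lem: compact subsets} the branch is continuous into $C^{\alpha+2}_{\text{even}}(\Si_P)\hookrightarrow C^2$, so these strict inequalities persist for $s$ near $s_0$; combined with the identities $\varphi(s)'(0)=\varphi(s)'(\pm P/2)=0$ they force $\varphi(s)'<0$ on all of $(-P/2,0)$ — the interior being controlled by $C^1$-closeness and the endpoints by the sign of $\varphi''$ there — whence $s\in A$. Therefore $A=[0,\infty)$.

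The contradiction is then immediate. As $s\to T^-$ we have $\mathfrak{R}(s)\to(\lambda(\mu^*),\mu^*)$, so for $s$ close to $T$ the (nontrivial) solution $\mathfrak{R}(s)$ lies on $\mathcal{C}^+$ or on $\mathcal{C}^-$; since $\varphi(s)$ is strictly decreasing on $(-P/2,0)$ while the solutions on $\mathcal{C}^-$ are strictly increasing there, we must have $\mathfrak{R}((T-\delta',T))\subset\mathcal{C}^+$ for some $\delta'>0$. But $\mathfrak{R}((0,\delta))$ and $\mathfrak{R}((T-\delta',T))$ are connected subsets of the arc $\mathcal{C}^+$ both accumulating at its endpoint $(\lambda(\mu^*),\mu^*)$, hence both contain a common initial sub-arc $\{(\varphi(\sigma),\mu(\sigma)):0<\sigma<c\}$; picking $\sigma_0\in(0,c)$ and the corresponding parameters $s_1\in(0,\delta)$, $s_2\in(T-\delta',T)$ with $\mathfrak{R}(s_1)=\mathfrak{R}(s_2)$ (and shrinking $\delta,\delta'<T/2$) contradicts the injectivity of $\mathfrak{R}$ on $[0,T)$. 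Hence alternative (iii) cannot occur.

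The step I expect to be the real obstacle is the openness of $A$: one cannot conclude from $C^1$-closeness alone, because $\varphi(s)'$ vanishes identically in $s$ at the endpoints $0$ and $\pm P/2$ of the half-period; this is precisely where the strict sign of $\varphi''$ at the crest and trough furnished by Theorem \ref{thm: strictly increasing}, together with the $C^2$-continuity of the branch inherited from Lemma \ref{lem: compact subsets}, must be used. A minor additional point needing care is the local description near $(\lambda(\mu^*),\mu^*)$ and the assertion that a nontrivial solution there which is monotone on $(-P/2,0)$ must sit on the prescribed arm, both of which follow from the Crandall--Rabinowitz analysis and the expansion of $\varphi(\sigma)$.
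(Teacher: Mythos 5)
Your proposal reaches the right conclusion but takes a genuinely different route from the paper's. The paper fixes the closed cone $\mathcal{K}$ of solutions that are non-decreasing on $(-P/2,0)$, verifies that any non-constant $\varphi\in\mathfrak{R}^1\cap\mathcal{K}$ is an \emph{interior} point of $S^1\cap\mathcal{K}$ (by bootstrapping $C^\alpha$-closeness to $C^2$-closeness via the smoothing of $L$ and the strict signs $\varphi''(0)<0$, $\varphi''(\pm P/2)>0$, $\varphi'>0$ from Theorem~\ref{thm: strictly increasing}), and then simply invokes Theorem~9.2.2 of \cite{Buffoni2003ato}, which is a packaged statement that cone-invariance plus interiority precludes the periodic alternative. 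You instead re-derive the conclusion of that abstract theorem by hand: a connectedness argument to show the branch stays in the cone, followed by a local Crandall--Rabinowitz picture at the bifurcation point and an injectivity contradiction. The key technical lemma — that the monotonicity condition is open along the branch because the strict signs of $\varphi''$ at the crest and trough persist in $C^2$ — is identical in substance to the paper's interiority argument, and your observation that $C^1$-closeness alone is insufficient at the endpoints is exactly the point the paper's proof addresses.

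There is, however, a gap that the paper's reliance on Theorem~9.2.2 conveniently sidesteps. In the openness step you assert that $\varphi(s_0)$ for $s_0>0$ is \textquotedblleft nontrivial, hence non-constant.\textquotedblright\ This is not a priori justified: before the theorem is proved, nothing rules out that the distinguished arc $\mathfrak{R}$ passes through another constant solution $(\lambda(\tilde\mu),\tilde\mu)$ at some $s_0>0$ (the constants $(\lambda(\mu),\mu)$ with $\mu>\sqrt a$ all lie in $U$, and such a point is another possible bifurcation point). At a constant solution Theorem~\ref{thm: strictly increasing} gives no information, so the openness of $A$ fails there and the connectedness argument breaks down; the analytic arc could in principle leave and re-enter the cone on opposite Crandall--Rabinowitz arms. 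The paper implicitly acknowledges this: in the proof of Lemma~\ref{lem: alt (i) and (ii)} it remarks that \textquotedblleft the proof of Theorem~\ref{thm: alternative (iii)} also implies that the curve $(\varphi(s),\mu(s))$ cannot reconnect to the curve of constant solutions we bifurcated from for any finite $s$\textquotedblright\ — i.e.\ ruling out such reconnection is part of what the cone theorem delivers, not something one may assume. To make your direct argument airtight you would either need to prove separately that $\mathfrak{R}$ never revisits the trivial branch, or analyse the local structure at a hypothetical secondary bifurcation to show the cone is still preserved there.
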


\begin{proof}
Let 
\begin{equation*}
\mathcal{K}:=\lbrace \varphi\in C_{\text{even}}^\alpha(\Si_P) : \varphi \,\, \text{is non-decreasing on} \,\, (-P/2,0)\rbrace,
\end{equation*}
which is a closed cone in $C^\alpha(\Si_P)$, and let $\mathfrak{R}^1$ and $S^1$ denote the $\varphi$ parts of $\mathfrak{R}$ and $S$ respectively. The result follows from Theorem 9.2.2 in \cite{Buffoni2003ato} if we can show that if $\varphi\in \mathrm{R}^1\cap \mathcal{K}$ is non-constant, then $\varphi$ is an interior point of $S^1\cap \mathcal{K}$. To see this, let $\varphi$ be a non-constant solution that is non-decreasing on $(-P/2,0)$. By Theorem \ref{thm: regularity I}, $\varphi$ is smooth and we can apply Theorem \ref{thm: strictly increasing} to conclude that $\varphi''(0)<0$, $\varphi''(-P/2)>0$ and $\varphi'>0$ on $(-P/2,0)$. Let $\psi$ be a solution within $\delta\ll 1$ distance of $\varphi$ in $C^\alpha$, with $\delta$ small enough that $\psi<\mu$. Iterating as in the proof of Theorem \ref{thm: regularity I}, we get that $\|\varphi-\psi\|_{C^2}<\tilde{\delta}$, where $\tilde{\delta}$ can be made arbitrarily small by taking $\delta$ smaller. This implies that $\psi$ also is non-decreasing on $(-P/2,0)$. Hence $\psi\in S^1\cap \mathcal{K}$.
\end{proof}

\begin{lemma}
\label{lem: uniform convergence}
Any sequence $\lbrace(\varphi_n,\mu_n)\rbrace_n\subset S$ of solutions to \eqref{eq: DP} with $\lbrace \mu_n\rbrace_n$ bounded has a subsequence that converges uniformly to a solution $\varphi$.
\end{lemma}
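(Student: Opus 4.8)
The plan is to derive, directly from the equation, a uniform $L^\infty$ bound and a uniform modulus of continuity for the $\varphi_n$, and then invoke the Arzel\`a--Ascoli theorem and pass to the limit. Write the solutions in the form \eqref{eq: rewritten DP}, $(\mu_n-\varphi_n)^2=\mu_n^2+2a-3L(\varphi_n^2)$. Since $\{\mu_n\}_n\subset(\sqrt a,\infty)$ is bounded, say $\sqrt a<\mu_n\le\bar\mu$, and since $L(\varphi_n^2)\ge0$, the right-hand side lies in $[0,\bar\mu^2+2a]$, so $|\mu_n-\varphi_n|\le\sqrt{\bar\mu^2+2a}$ and hence $\|\varphi_n\|_{L^\infty}\le M:=\bar\mu+\sqrt{\bar\mu^2+2a}$ uniformly in $n$. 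Passing to a subsequence we may also assume $\mu_n\to\mu\in[\sqrt a,\bar\mu]$.

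The crucial step is the uniform modulus of continuity, and here the point is that one cannot simply quote Lemma \ref{lem: compact subsets}: the $\varphi_n$ may approach the critical level $\mu$, and then their $C^\alpha$-norms need not remain bounded, since the higher-regularity estimates of Theorem \ref{thm: regularity I} degenerate as $\mu^2+2a-3L(\varphi^2)\to0$. Nevertheless a uniform H\"older-$\tfrac12$ bound survives. Indeed, $K_P$ is Lipschitz on $\Si_P$ (it is piecewise smooth with a single corner at $0$), so for any $f\in L^\infty(\Si_P)$ one has $|Lf(x)-Lf(y)|\le\Lambda_{K,P}\|f\|_{L^\infty}|x-y|$ with $\Lambda_{K,P}$ depending only on $K$ and $P$. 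Applying this with $f=\varphi_n^2$, the function $g_n:=\mu_n^2+2a-3L(\varphi_n^2)$ is Lipschitz with constant $C:=3\Lambda_{K,P}M^2$, uniformly in $n$, and $g_n=(\varphi_n-\mu_n)^2\ge0$ everywhere. Since $\mu_n-\varphi_n\ge0$ (as $(\varphi_n,\mu_n)\in U$), we have $\mu_n-\varphi_n=\sqrt{g_n}$, and using $|\sqrt s-\sqrt t|\le\sqrt{|s-t|}$ we get
\begin{equation*}
|\varphi_n(x)-\varphi_n(y)|=\bigl|\sqrt{g_n(x)}-\sqrt{g_n(y)}\bigr|\le\sqrt{C\,|x-y|}
\end{equation*}
for all $x,y$ and all $n$. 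Thus $\{\varphi_n\}_n$ is uniformly bounded and uniformly equicontinuous in $C(\Si_P)$.

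By Arzel\`a--Ascoli there is a subsequence, not relabelled, with $\varphi_n\to\varphi$ uniformly for some $\varphi\in C(\Si_P)$ (in fact $\varphi\in C^{0,1/2}(\Si_P)$, and $\varphi$ is even as a uniform limit of even functions). It remains to pass to the limit in \eqref{eq: DP}. Since $L=K_P\ast\,\cdot\,$ with $K_P\in L^1(\Si_P)$, Young's inequality gives $\|L(\varphi_n^2)-L(\varphi^2)\|_{L^\infty}\le\|K_P\|_{L^1}\|\varphi_n^2-\varphi^2\|_{L^\infty}\le 2M\|K_P\|_{L^1}\|\varphi_n-\varphi\|_{L^\infty}\to0$; likewise $\mu_n\varphi_n\to\mu\varphi$ and $\varphi_n^2\to\varphi^2$ uniformly. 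Hence $-\mu\varphi+\tfrac12\varphi^2+\tfrac32L(\varphi^2)=a$ holds pointwise, i.e. $\varphi$ is a solution of \eqref{eq: DP} with wave-speed $\mu$ (and $\varphi\le\mu$, since $\varphi_n<\mu_n$ for every $n$).

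The main obstacle is precisely the second step: one must obtain compactness with no uniform control above Lipschitz regularity, because the degenerate scenario $\sup\varphi_n\to\mu$ is exactly the one this lemma is designed to handle. The resolution is that the square-root structure of the equation, combined with the mere Lipschitz continuity of $K_P$, already forces a uniform $C^{0,1/2}$ bound, which is all Arzel\`a--Ascoli requires; the passage to the limit in \eqref{eq: DP} is then routine since $L$ is continuous on $C(\Si_P)$.
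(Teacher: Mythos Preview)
Your proof is correct and follows essentially the same route as the paper's: bound $\|\varphi_n\|_{L^\infty}$ uniformly from the equation, obtain equicontinuity of $L(\varphi_n^2)$ from properties of the kernel, and then apply Arzel\`a--Ascoli. The only noteworthy difference is that you make the square-root step explicit---using $\varphi_n<\mu_n$ to write $\mu_n-\varphi_n=\sqrt{g_n}$ and deduce a uniform $C^{0,1/2}$ bound on the $\varphi_n$ themselves---whereas the paper's proof stops at equicontinuity of $L(\varphi_n^2)$ and leaves this passage implicit; you also spell out the limit-passage in \eqref{eq: DP}, which the paper omits.
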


\begin{proof}
From \eqref{eq: DP} we have that
\begin{equation*}
\frac{1}{2}\varphi^2=a+\mu \varphi-\frac{3}{2}L(\varphi^2)<a+\mu\varphi,
\end{equation*}
which implies that
\begin{equation*}
\|\varphi\|_{L^\infty}^2\leq 2a+2\mu\|\varphi\|_{L^\infty}.
\end{equation*}
Hence $\lbrace\varphi_n\rbrace_n$ is bounded whenever $\lbrace \mu_n\rbrace_n$ is. We have that 
\begin{align*}
|L(\varphi_n^2)(x+h)-L(\varphi_n^2)(x)|& =\left|\int_\R (K(x+h-y)-K(x-y))\varphi_n(y)^2 \dy\right| \\
& \leq \|\varphi_n\|_{L^\infty}^2\int_\R |K(x+h-y)-K(x-y)|\dy.
\end{align*}
As $K$ is continuous and integrable, the final integral can be made arbitrarily small by taking $h$ sufficiently small. This shows that $\lbrace L(\varphi_n^2)\rbrace_n$ is equicontinuous. Arzela-Ascoli's theorem then implies the existence of a uniformly convergent subsequence.
\end{proof}

\begin{lemma}
\label{lem: lower bound mu}
For fixed $a>0$ and $P>0$, $\mu(s)$ does not approach $\sqrt{a}$ as $s\rightarrow \infty$.
\end{lemma}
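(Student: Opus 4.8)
The plan is to argue by contradiction, playing the quantitative lower bound of Lemma \ref{lem: global bound} against the constraint on $\max\varphi$ from Theorem \ref{thm: solutions to DP}, via a compactness argument. Suppose there is a sequence $s_n\to\infty$ with $\mu_n:=\mu(s_n)\to\sqrt a$, and write $\varphi_n:=\varphi(s_n)$. Along $\mathfrak{R}$ we have $(\varphi_n,\mu_n)\in U$, so $\varphi_n<\mu_n$, and the $\varphi_n$ are even and non-decreasing on $(-P/2,0)$ since $\mathfrak{R}$ lies in the cone $\mathcal{K}$ (cf.\ the proof of Theorem \ref{thm: alternative (iii)}); for $n$ large each $\varphi_n$ is also non-constant (on $\mathfrak{R}$ a constant value can occur only at a bifurcation point, and as $\mathfrak{R}$ lies in the monotone cone $\mathcal{K}$ and is not periodic, it does not return to such a point). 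Hence Lemma \ref{lem: global bound} applies to each of these $\varphi_n$.

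The decisive point is that the constant in Lemma \ref{lem: global bound} may be taken proportional to the wave-speed: inspecting its proof, in the case $\varphi_n\ge 0$ one arrives at an inequality of the form $\mu_n-\varphi_n(P/2)\ge \tfrac{3}{8}P\,\tilde\lambda_{K,P}\,\varphi_n(P/2)$, and solving for the ratio $\varphi_n(P/2)/\mu_n$ yields $\varphi_n(P/2)\le(1-c_{K,P})\mu_n$ for a constant $c_{K,P}\in(0,1)$ depending only on $K_P$ and $P$ (the case $\varphi_n(P/2)<0$ is immediate). As $\varphi_n$ is even and non-decreasing on $(-P/2,0)$ we have $\min\varphi_n=\varphi_n(P/2)$, so $\min\varphi_n\le(1-c_{K,P})\mu_n$ for all large $n$. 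Since $\{\mu_n\}$ is bounded, Lemma \ref{lem: uniform convergence} gives a subsequence of $\{\varphi_n\}$ converging uniformly to a $P$-periodic solution $\varphi$ of \eqref{eq: DP} with wave-speed $\tilde\mu:=\sqrt a$; in particular $a=\tilde\mu^2$. Passing to the limit, $\min\varphi\le(1-c_{K,P})\sqrt a<\sqrt a$. On the other hand Theorem \ref{thm: solutions to DP} gives $\tfrac14\big(\mu_n+\sqrt{\mu_n^2+8a}\big)<\max\varphi_n<\mu_n$, and for $a>0$ both bounds tend to $\sqrt a$ as $\mu_n\to\sqrt a$, so $\max\varphi=\sqrt a>\min\varphi$. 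Thus $\varphi$ is a non-constant $P$-periodic solution of \eqref{eq: DP} with $a=\tilde\mu^2$, contradicting part (ii) of Theorem \ref{thm: solutions to DP}; this contradiction proves the lemma.

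The main obstacle is the opening of the second paragraph: one must reopen the proof of Lemma \ref{lem: global bound} to verify that its lower bound does not degenerate faster than $\mu$ as $\mu\searrow\sqrt a$, so that it genuinely competes with the window $\big(\tfrac14(\mu+\sqrt{\mu^2+8a}),\,\mu\big)$ into which $\max\varphi$ is forced. It is precisely the collapse of this window to the single point $\sqrt a$ that uses the hypothesis $a>0$; for $a<0$ the analogous window near the degenerate speed $\sqrt{-8a}$ retains a fixed positive width, which is why --- as noted in the remark preceding Theorem \ref{thm: local bif} --- no analogue of this lemma is available in that case. The remaining ingredients (the Arzela-Ascoli argument behind Lemma \ref{lem: uniform convergence} and the rigidity in Theorem \ref{thm: solutions to DP}(ii)) are routine.
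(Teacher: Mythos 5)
Your proof is correct, and it reaches the contradiction by a genuinely different route than the paper. Both arguments begin identically: suppose $\mu_n\to\sqrt a$, extract a uniformly convergent subsequence via Lemma~\ref{lem: uniform convergence} to a limiting $P$-periodic solution, and use the squeeze $\tfrac14(\mu_n+\sqrt{\mu_n^2+8a})<\max\varphi_n<\mu_n$ from Theorem~\ref{thm: solutions to DP} to force $\max\varphi=\sqrt a$. They then diverge. The paper's proof observes that $\max L(\varphi^2)\le\max\varphi^2$ with equality only for constants, concludes $\varphi\equiv\sqrt a$, and then invokes Lemma~\ref{lem: global bound} to contradict $\mu_{n}-\varphi_{n}(P/2)\to 0$. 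You instead feed the quantitative lower bound from Lemma~\ref{lem: global bound} directly into the compactness step to show $\min\varphi\le(1-c_{K,P})\sqrt a<\sqrt a=\max\varphi$, so the limit is \emph{non}-constant, and then appeal to the rigidity statement Theorem~\ref{thm: solutions to DP}(ii) (no non-constant solutions when $a\ge\mu^2$, and here $a=\tilde\mu^2$). A point worth emphasizing: your explicit verification that $C_{K,P,\mu}$ may be taken proportional to $\mu$ (hence bounded away from $0$ as $\mu\searrow\sqrt a$) is not an extra burden peculiar to your route --- the paper's final line ``$\lim_k C_{K,P,\mu_{n_k}}>0$'' implicitly relies on the same fact, which otherwise requires at least a continuity argument for $C_{K,P,\mu}$ in $\mu$. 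So you have surfaced a dependence that the paper's phrasing leaves tacit. The two proofs are of comparable length and use the same underlying ingredients (Lemma~\ref{lem: global bound}, the Arzel\`a--Ascoli compactness, the window constraint from Theorem~\ref{thm: solutions to DP}, and the strict monotonicity of $L$, which your route uses only indirectly through Theorem~\ref{thm: solutions to DP}(ii)); the paper's is marginally more self-contained in that it does not need to reopen the proof of Lemma~\ref{lem: global bound}, though as noted it does need a weaker form of the same observation.
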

\begin{proof}
Assume for a contradiction that there is a sequence $\lbrace \mu_n\rbrace_n$ such that $\mu_n\rightarrow \sqrt{a}$ as $n\rightarrow \infty$, while at the same time $\varphi_n=\varphi_{\mu_n}$ is a sequence along the global bifurcation curve in Theorem \ref{thm: global bif}. According to Lemma \ref{lem: uniform convergence} a subsequence $\lbrace \varphi_{n_k}\rbrace_k$ converges to a solution $\varphi_0$ of \eqref{eq: DP}. From Theorem \ref{thm: solutions to DP} we have that $\max \varphi_{n_k}>\frac{\mu_{n_k}+\sqrt{\mu_{n_k}^2+8a}}{4}>\sqrt{a}$, while $\max \varphi_{n_k}<\mu_{n_k}\rightarrow \sqrt{a}$. It follows that $\max \varphi_0=\sqrt{a}$ and hence $\max L(\varphi_0^2)=a$. However, $\max L(\varphi^2)\leq \max \varphi^2$ with equality if and only if $\varphi$ is constant. Hence $\varphi_0 \equiv \sqrt{a}$. This leads to a contradiction with Lemma \ref{lem: global bound}, noting that the constant $C_{K,P,\mu}$ is positive for all positive $\mu$, as we get that
\begin{equation*}
0=\lim_{k\rightarrow \infty} \mu_{n_k}-\varphi_{n_k}(P/2)\geq \lim_{k\rightarrow \infty} C_{K,P,\mu_{n_k}}>0.
\end{equation*}
\end{proof}

\begin{lemma}
\label{lem: alt (i) and (ii)}
Let $a>0$ and $P>0$. If $\sup_{s\geq 0} \mu(s)<\infty$, then alternatives (i) and (ii) in Theorem \ref{thm: global bif} both occur.
\end{lemma}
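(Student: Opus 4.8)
The plan is to derive the lemma from Theorem~\ref{thm: global bif} together with Theorem~\ref{thm: alternative (iii)}: the latter excludes alternative~(iii), so at least one of (i) and (ii) holds, and it therefore suffices to prove that, under the standing hypothesis $\sup_{s\geq0}\mu(s)<\infty$, alternatives (i) and (ii) are \emph{equivalent}, for then both must hold. Throughout write $\mathfrak R(s)=(\varphi(s),\mu(s))$ and $d(s):=\mu(s)-\max_{\Si_P}\varphi(s)>0$. Two preliminary reductions: by Lemma~\ref{lem: lower bound mu} and continuity of $\mu$ on $[0,\infty)$, the quantity $\mu(s)-\sqrt a$ stays bounded below by a positive constant, so the portion $\{\mu=\sqrt a\}$ of $\partial U$ is irrelevant and $\mathfrak R(s)$ approaches $\partial U$ precisely when $d(s)\to0$; and by Lemma~\ref{lem: uniform convergence} one has $\sup_{s\geq0}\|\varphi(s)\|_{L^\infty}<\infty$, so that, $\mu(s)$ being bounded, alternative~(i) amounts to $\|\varphi(s)\|_{C^\alpha(\Si_P)}\to\infty$. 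Recall also that the global curve lies in the cone $\mathcal K$ of even functions non-decreasing on $(-P/2,0)$, as in the proof of Theorem~\ref{thm: alternative (iii)}.

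For (i)$\Rightarrow$(ii) I would argue by contradiction: if $\|\varphi(s)\|_{C^\alpha}\to\infty$ while $d(s)\not\to0$, choose $s_n\to\infty$ with $d(s_n)\geq\varepsilon_0>0$. Then $\varphi(s_n)\leq\mu(s_n)-\varepsilon_0$ on $\Si_P$, so the radicand in $\varphi(s_n)=\mu(s_n)-\sqrt{\mu(s_n)^2+2a-3L(\varphi(s_n)^2)}$ takes values in a fixed compact subinterval of $(0,\infty)$. Since $L$ maps $L^\infty(\Si_P)$ boundedly into $\mathcal{C}^2(\Si_P)\hookrightarrow C^\alpha(\Si_P)$ (as in the proof of Theorem~\ref{thm: regularity I}), since $\|\varphi(s_n)\|_{L^\infty}$ and $\mu(s_n)$ are bounded, and since $t\mapsto\sqrt t$ is smooth on that subinterval while $C^\alpha(\Si_P)$ is stable under composition with smooth maps with controlled norms, this yields a bound on $\|\varphi(s_n)\|_{C^\alpha}$ that is uniform in $n$, contradicting the assumption. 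Hence $d(s)\to0$, which is alternative~(ii).

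For (ii)$\Rightarrow$(i), suppose $d(s)\to0$ but $\|\varphi(s)\|_{C^\alpha}\not\to\infty$, and pick $t_n\to\infty$ with $\|\varphi(t_n)\|_{C^\alpha}\leq R$. By Lemma~\ref{lem: uniform convergence} a subsequence of $\varphi(t_n)$ converges uniformly to a solution $\varphi_\infty$ of \eqref{eq: DP} with speed $\mu_\infty=\lim\mu(t_n)$; and because $C^\alpha(\Si_P)\hookrightarrow\hookrightarrow C^1(\Si_P)$ for $\alpha\in(1,2)$, a further subsequence converges in $C^1$, so $\varphi_\infty\in C^1(\Si_P)$. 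On the other hand $d(t_n)\to0$ forces $\max\varphi_\infty=\mu_\infty$; $\varphi_\infty$ is even, $\leq\mu_\infty$, non-decreasing on $(-P/2,0)$ with $\varphi_\infty(0)=\mu_\infty$ (all these pass to the uniform limit), and non-constant, since a constant solution equal to its crest height would force $\mu_\infty=\sqrt a$, which Lemma~\ref{lem: lower bound mu} excludes. Thus Theorem~\ref{thm: regularity II} applies and shows $\varphi_\infty$ is exactly Lipschitz at $0$, hence not differentiable there, contradicting $\varphi_\infty\in C^1(\Si_P)$. Therefore $\|\varphi(s)\|_{C^\alpha}\to\infty$, i.e.\ alternative~(i).

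The main obstacle is the uniform regularity bound in the step (i)$\Rightarrow$(ii): one must make sure that the smoothing $L\colon L^\infty\to\mathcal{C}^2$ and the Nemytskii map $g\mapsto\sqrt g$ give estimates that do not blow up along $\{s_n\}$, which is exactly where the uniform $L^\infty$ control from Lemma~\ref{lem: uniform convergence} and the uniform gap $d(s_n)\geq\varepsilon_0$ (keeping $g=(\mu-\varphi)^2$ in a compact subinterval of $(0,\infty)$, with the lower bound provided by the gap) enter simultaneously. The reverse implication is comparatively soft, its one essential input being Theorem~\ref{thm: regularity II}(iii): a periodic travelling wave that attains the crest height is only Lipschitz, never $C^1$, so it cannot be a $C^\alpha$-limit of waves along the curve.
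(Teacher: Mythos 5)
Your proof is correct and follows essentially the same strategy as the paper's: exclude alternative (iii) by Theorem~\ref{thm: alternative (iii)}, then show that under $\sup_{s\ge0}\mu(s)<\infty$ each of (i) and (ii) is equivalent to $\mu(s)-\varphi(s)(0)\to 0$, with Lemma~\ref{lem: lower bound mu} ruling out the $\mu\to\sqrt a$ portion of $\partial U$ and the smoothing bootstrap giving the uniform $C^\alpha$ bound when the crest stays strictly below $\mu$. You spell out two steps the paper leaves terse (the compact-embedding argument that a $C^\alpha$-bounded subsequence would converge to a $C^1$ limit contradicting Theorem~\ref{thm: regularity II}(iii), and the explicit fixed-point estimate), but the underlying argument is the same.
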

\begin{proof}
We already know from Theorem \ref{thm: alternative (iii)} that alternative (iii) cannot occur, thus either (i), (ii), or both has to occur. Theorem \ref{thm: regularity II} implies that alternative (i) happens if $\lim_{s\rightarrow \infty}\mu(s)-\varphi(s)(0)=0$. From 
\begin{equation*}
(\mu-\varphi)\varphi'=\frac{3}{2}\left( L(\varphi^2)\right)'\leq \frac{3}{2}L(\varphi^2),
\end{equation*}
we see that $\varphi'$ is bounded in $\mu$. Similarly, it is easy to see that if $\varphi(0)<\mu$, then $\|\varphi\|_{C^2 (\Si_P)}$ is bounded in $\mu$. Hence, if $\sup_{s\geq 0} \mu(s)<\infty$, alternative (i) happens if and only if $\lim_{s\rightarrow \infty}\mu(s)-\varphi(s)(0)=0$, which implies that (ii) occurs as well.

From Lemma \ref{lem: lower bound mu} we know that $\inf_{s\geq 0} \mu(s)>\sqrt{a}$ and the assumption $\sup_{s\geq 0} \mu(s)<\infty$ then implies that $\mu(s)$ does not approach the boundary of $(\sqrt{a},\infty)$. Thus alternative (ii) can only happen if $\lim_{s\rightarrow \infty}\mu(s)-\varphi(s)(0)=0$, which in turn implies (i).
\end{proof}

%\begin{remark}
%Equation \eqref{eq: DP} allows for smooth, non-constant solutions for all wave-speeds $\mu>\sqrt{a}$, so even if alternative (iii) can be excluded, alternatives (i) and/or (ii) may well occur by $\mu(s)\rightarrow \infty$ while $\varphi(s)$ does not approach the boundary of $C^\alpha(\Si_P)$. It may be the case that for fixed $a>0$ and $P$ small enough, there is an upper bound on $\mu$ for which we have solutions, but this is not immediately clear. Especially not from the non-local formulation. If such a bound exists, the bifurcation will yield a sequence that converges to a peakon. In any case, this can potentially only be done for "small" periods.
%\end{remark}

\begin{proposition}
\label{prop: upper bound mu}
For fixed $a>0$, there is a number $C>0$ such that if $P<C$, there is an upper bound on $\mu$ above which there are no smooth solutions to \eqref{eq: DP} except constant solutions.
\end{proposition}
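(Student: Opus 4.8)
Here is the strategy I would follow.

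The guiding principle is that for $P$ small the operator $L$ is close to the projection onto the mean: on $\Si_P$ every non-constant Fourier mode is damped by a factor $(1+(2\pi k/P)^2)^{-1}\to 0$, so \eqref{eq: DP} should essentially force $\varphi$ to be nearly constant, and a nearly-constant solution can only be the constant one unless $\mu$ is moderate. I would quantify this through
\[
\epsilon_P:=\Bigl\|K_P-\tfrac1P\Bigr\|_{L^1(\Si_P)},\qquad \tfrac1P=\int_{\Si_P}K_P\dx,
\]
the $L^1$-distance of $K_P$ from its mean. Since $K_P$ is decreasing on $(0,P/2)$ one gets $\epsilon_P\le P\sup_{\Si_P}|K_P-\tfrac1P|\to 0$ as $P\to 0$. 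The only feature of $L$ the argument uses is that, because $L$ preserves means and $\int_{\Si_P}\bigl(K_P(x_1-\cdot)-K_P(x_2-\cdot)\bigr)\dx=0$, for any continuous $f$ and any $x_1,x_2\in\Si_P$,
\[
|Lf(x_1)-Lf(x_2)|\le \epsilon_P\,\mathrm{osc}(f).
\]
Before using it I would record two reductions. First, a smooth non-constant solution satisfies $\varphi<\mu$ everywhere: by Theorem~\ref{thm: regularity II} a solution attaining $\mu$ is a peakon, and the contradiction arguments in the proof of Theorem~\ref{thm: solutions to DP} (those ruling out $\varphi\gneq\mu$ and $\varphi$ straddling $\mu$) apply for every $a$. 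Second, wherever $\varphi<0$ one has from \eqref{eq: rewritten DP} and $L(\varphi^2)\ge0$ that $\varphi^2=2\mu\varphi+2a-3L(\varphi^2)<2a$; hence $\|\varphi\|_{L^\infty}<\max\{\mu,\sqrt{2a}\}$, so $\|\varphi\|_{L^\infty}<\mu$ whenever $\mu>\sqrt{2a}$.

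The heart of the proof is to evaluate \eqref{eq: difference of solution} at a maximiser $x_M$ and a minimiser $x_m$ of a non-constant $\varphi$, with $M=\varphi(x_M)$, $m=\varphi(x_m)$:
\[
(2\mu-M-m)(M-m)=3\bigl(L(\varphi^2)(x_M)-L(\varphi^2)(x_m)\bigr).
\]
The right-hand side is at most $3\epsilon_P\,\mathrm{osc}(\varphi^2)$ by the displayed estimate, and $\mathrm{osc}(\varphi^2)\le 2\|\varphi\|_{L^\infty}(M-m)$ elementarily. Dividing by $M-m>0$ (legitimate precisely because $\varphi$ is non-constant) gives
\[
2\mu-M-m\le 6\,\epsilon_P\,\|\varphi\|_{L^\infty}.
\]
Now I would split into two regimes. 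If $m<0$, then $2\mu-M-m>2\mu-M>\mu$ since $M<\mu$, so the last inequality forces $\mu<6\epsilon_P\|\varphi\|_{L^\infty}<6\epsilon_P\mu$ as soon as $\mu>\sqrt{2a}$ — absurd once $\epsilon_P<\tfrac16$. If $m\ge0$, then $\|\varphi\|_{L^\infty}=M<\mu$, so the inequality reads $m\ge 2\mu-M(1+6\epsilon_P)\ge\mu(1-6\epsilon_P)$; on the other hand $(\mu-\varphi)^2\ge0$ in \eqref{eq: rewritten DP} gives $\max_{\Si_P}L(\varphi^2)\le\tfrac{\mu^2+2a}{3}$, while $\max L(\varphi^2)\ge\overline{L(\varphi^2)}=\overline{\varphi^2}\ge m^2$ (the mean of $\varphi^2$ dominates $m^2$ since $\varphi\ge m\ge0$). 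Combining, $3\mu^2(1-6\epsilon_P)^2\le 3m^2\le\mu^2+2a$, that is $\mu^2\bigl(3(1-6\epsilon_P)^2-1\bigr)\le 2a$. Hence, choosing $C>0$ so small that $P<C$ forces $\epsilon_P$ below the absolute threshold making both $6\epsilon_P<1$ and $3(1-6\epsilon_P)^2>1$, for every $P<C$ no non-constant smooth solution of \eqref{eq: DP} has $\mu$ exceeding $\mu^\ast:=\max\bigl\{\sqrt{2a},\ \sqrt{2a/(3(1-6\epsilon_P)^2-1)}\bigr\}$, which is the claim.

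The routine ingredients are the convolution/oscillation estimate and the decay $\epsilon_P\to0$. The step that needs genuine care is the regime $m\ge0$: there the difference identity by itself only says that $\varphi$ is uniformly close to $\mu$, and a second, independent bound — the mean estimate $\overline{\varphi^2}\le(\mu^2+2a)/3$ extracted from $(\mu-\varphi)^2\ge0$ — is what converts ``$\varphi$ near $\mu$'' into an upper bound on $\mu$. One should also check carefully that the reduction $\varphi<\mu$ for smooth solutions is valid for all admissible $a$ and not only in the range covered verbatim by Theorem~\ref{thm: solutions to DP}.
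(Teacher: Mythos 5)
Your proposal is correct and takes a genuinely different, and in some ways cleaner, route than the paper. The paper's argument is a derivative bootstrap: it first bounds $\max\varphi'$ by $C_P\mu$ using the second-order relation \eqref{eq: second der} evaluated at a maximiser of $\varphi'$, uses the ``pinch'' $\min\varphi<\tfrac{\mu+\sqrt{\mu^2+8a}}{4}<\max\varphi$ together with the period to get $\max\varphi\le c\mu+\bigO(\mu^{-1})$ with $c<1$, and then re-inserts this into the first-order relation \eqref{eq: first der} to self-bound $\varphi'$ below itself. Your argument instead isolates a single quantitative feature of $L$ on a short circle — the $L^1$-oscillation $\epsilon_P=\|K_P-\tfrac1P\|_{L^1(\Si_P)}$, which is $\bigO(P^2)$ — and combines two scalar estimates: the two-point identity \eqref{eq: difference of solution} evaluated at a maximiser and a minimiser (bounded by $\epsilon_P\,\mathrm{osc}(\varphi^2)$), and the a priori mean bound $\overline{\varphi^2}=\overline{L(\varphi^2)}\le(\mu^2+2a)/3$ coming from $(\mu-\varphi)^2\ge0$. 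This buys several things the paper's proof does not: no differentiation of the equation is needed, so no bootstrapping; you never assume $\varphi$ is even or monotone on a half-period (the paper's proof silently does); and you obtain an explicit threshold $\mu^*=\max\{\sqrt{2a},\,\sqrt{2a/(3(1-6\epsilon_P)^2-1)}\}$, whereas the paper's bound is implicit.

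The one place to be careful is exactly the one you flag yourself: the reduction that every smooth non-constant periodic solution satisfies $\varphi<\mu$ everywhere. The paper makes the same assumption, in the parenthetical ``recall that $\varphi$ is smooth if $\varphi(0)<\mu$, and a peakon if $\varphi(0)=\mu$; no other possibilities exists'', and in both cases this is not fully spelled out for $0<a<\mu^2$. What Theorem~\ref{thm: solutions to DP} literally proves is for $a\ge\mu^2$; what Theorem~\ref{thm: regularity II} proves is for even, non-decreasing solutions lying below $\mu$. To close the gap in general one must still rule out $\varphi\gneq\mu$ and $\varphi$ straddling $\mu$ without evenness, which the paper's short convexity argument gestures at but does not carry out in detail. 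Since the Proposition is only invoked along the bifurcation branch, which lives by construction in $U=\{\varphi<\mu\}$, this is not a defect of your proof relative to the paper's; but if you want the Proposition to stand alone as a statement about arbitrary smooth periodic solutions, you should either restrict to the cone $\mathcal{K}$ used in Theorem~\ref{thm: alternative (iii)} (where Theorems~\ref{thm: strictly increasing} and \ref{thm: regularity II} give $\varphi<\mu$ directly) or supply the missing symmetry-free version of the $\varphi\le\mu$ argument.
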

\begin{proof}
Assume $\varphi$ is a smooth solution to \eqref{eq: DP} which is even and non-decreasing on $(-P/2,0)$ (recall that $\varphi$ is smooth if $\varphi(0)<\mu$, and a peakon if $\varphi(0)=\mu$; no other possibilities exists).
We know that $\varphi'$ has a maximum on $(-P/2,0)$, say $\varphi'(x_0)=\max \varphi'$. Then $\varphi''(x_0)=0$. As
\begin{equation*}
(\mu-\varphi(x))\varphi''(x)=(\varphi'(x))^2+3\int_{-P/2}^0(K_P'(x-y)-K_P'(x+y))\varphi(y)\varphi'(y)\dy,
\end{equation*}
and $K_P'(x-y)-K_P'(x+y)>0$ for $x<y<0$, we then get that
\begin{align*}
(\varphi'(x_0))^2&=-3\int_{-P/2}^0(K_P'(x_0-y)-K_P'(x_0+y))\varphi(y)\varphi'(y)\dy \\
&\leq -3\int_{-P/2}^{x_0}(K_P'(x_0-y)-K_P'(x_0+y))\varphi(y)\varphi'(y)\dy \\
& = \varphi(c_0)\varphi'(c_0)3\left|\int_{-P/2}^{x_0}(K_P'(x_0-y)-K_P'(x_0+y))\dy\right|,
\end{align*}
where $-P/2<c_0<x_0$. As $\varphi'(c_0)<\varphi'(x_0)$ and $\varphi(c_0)<\mu$, it follows that $\max \varphi'<C_P \mu$, where the constant $C_P$ depends on $P$ through the final integral above. As $K_P(x)=\frac{1}{2}\mathrm{e}^{-|x|}+\frac{\cosh(x)}{\mathrm{e}^P-1}$, the derivative is bounded by $1/2$ and the final integral above, hence also $C_P$, therefore goes to $0$ as $P\rightarrow 0$. From Theorem \ref{thm: solutions to DP}, we know that a solution $\varphi$ satisfies
\begin{equation*}
\min \varphi  <\frac{\mu+\sqrt{\mu^2+8a}}{4}<\max  \varphi .
\end{equation*}
If $\mu \ggg a$, then $\frac{\mu+\sqrt{\mu^2+8a}}{4}=\frac{\mu}{2}+\bigO(\mu^{-1})$. Hence there exists a point $x_1\in (-P/2,0)$ such that $\varphi(x_1)=\frac{\mu}{2}+\bigO(\mu^{-1})$. Trivially, for every $x\in (-P/2,0)$ we have the bounds
\begin{equation*}
\varphi(x_1)-(P/2)\max \varphi'<\varphi(x)<\varphi(x_1)+(P/2)\max \varphi'.
\end{equation*}
Combining this with the bound on the derivative above, we get
\begin{equation*}
\max \varphi<\frac{\mu}{2}+\frac{P}{2}C_P \mu+\bigO(\mu^{-1}).
\end{equation*}
For any $c\in (\frac{1}{2},1)$ we can take $P>0$ sufficiently small independently of $\mu$ such that
\begin{equation}
\label{eq: upper bound}
\max \varphi\leq c\mu +\bigO(\mu^{-1}).
\end{equation}
By the mean value theorem,
\begin{align*}
(\mu-\varphi(x))\varphi'(x) & =\frac{3}{2}\left(L(\varphi^2)\right)'(x) \\
& =3\int_{-P/2}^0(K_P(x-y)-K_P(x+y))\varphi'(y)\varphi(y)\dy \\
& =3 \varphi'(c_x)\varphi(c_x)\int_{-P/2}^0(K_P(x-y)-K_P(x+y))\dy,
\end{align*}
for some constant $c_x$ that depends on $x$. From \eqref{eq: upper bound} we get that there is a constant $C$ independent of $\mu$ and decreasing in $P$ such that $\varphi(c_x)/(\mu-\varphi(x))\leq C+\bigO(\mu^{-2})$ for all $x\in (-P/2,0)$. We therefore get that
\begin{equation}
\label{eq: upper bound derivative}
\varphi'(x)\leq \varphi'(c_x)C\int_{-P/2}^0(K_p(x-y)-K_P(x-y))\dy+\bigO(\mu^{-1}),
\end{equation}
where $C$ is independent of $\mu$ and decreases with $P$. The integral on the right hand side goes to $0$ for all $x\in (-P/2,0)$ as $P\rightarrow 0$. For $P$ sufficiently small, \eqref{eq: upper bound derivative} implies that $\varphi'\equiv 0$ for all sufficiently large $\mu$.
\end{proof}

\begin{theorem}
Let $a>0$ be fixed. For all $P>0$ sufficiently small, alternatives (i) and (ii) in Theorem \ref{thm: global bif} both occur. Given any unbounded sequence of positive numbers $s_n$, a subsequence of $\lbrace \varphi(s_n)\rbrace_n$ converges uniformly to a limiting wave $\varphi$ that solves \eqref{eq: DP} and satisfies
\begin{equation*}
\varphi(0)=\mu, \quad \varphi\in C^{0,1}(\R).
\end{equation*}
The limiting wave is even, strictly increasing on $(-P/2,0)$ and is exactly Lipschitz at $x\in P\Z$.
\end{theorem}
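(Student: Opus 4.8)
The strategy is simply to collect the results already proved, the single period-dependent ingredient being the uniform bound on the wave-speed, so I would start there. Fix $P$ smaller than the constant $C$ of Proposition~\ref{prop: upper bound mu} (and also $P<\sqrt{2}\,\pi$, so that the $k=1$ curve of Theorem~\ref{thm: local bif} exists). By Lemma~\ref{lem: compact subsets} every point of the global curve $\mathfrak{R}$ is a smooth solution, and for $s>0$ it is non-constant, since the analytic global branch meets the trivial branch of constant solutions only at the bifurcation point $s=0$. By the proof of Theorem~\ref{thm: alternative (iii)} the curve $\mathfrak{R}$ stays in the cone $\mathcal{K}$, so each $\varphi(s)$ is even and non-decreasing on $(-P/2,0)$. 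Since $P<C$, Proposition~\ref{prop: upper bound mu} rules out non-constant smooth solutions with large wave-speed, hence $\sup_{s\geq 0}\mu(s)<\infty$. Lemma~\ref{lem: alt (i) and (ii)} then shows that alternatives (i) and (ii) of Theorem~\ref{thm: global bif} both occur, and its proof gives in addition $\mu(s)-\varphi(s)(0)\to 0$ as $s\to\infty$.

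Next I would pass to the limiting wave. Given an unbounded sequence $s_n$, the sequence $\{\mu(s_n)\}_n$ is bounded, so after extracting a subsequence one has $\mu(s_{n_k})\to\mu$; by Lemma~\ref{lem: uniform convergence}, extracting once more, $\varphi(s_{n_k})\to\varphi$ uniformly, with $\varphi$ a solution of \eqref{eq: DP} of wave-speed $\mu$. By Lemma~\ref{lem: lower bound mu} one has $\mu>\sqrt a>0$. Uniform convergence preserves evenness, monotonicity on $(-P/2,0)$ and the inequality $\varphi\leq\mu$ (because $\varphi(s_{n_k})<\mu(s_{n_k})$), while $\mu(s)-\varphi(s)(0)\to 0$ forces $\varphi(0)=\mu$. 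Moreover $\varphi$ is non-constant: the constant solution $\varphi\equiv\mu$ of \eqref{eq: DP} would require $\mu^2=a$, contradicting $\mu>\sqrt a$.

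It remains to record regularity and strict monotonicity. Each $\varphi(s_{n_k})$ is a $BUC^1$ solution in the cone $\mathcal{K}$ satisfying $0<a<\mu(s_{n_k})^2$, so Theorem~\ref{thm: strictly increasing} gives $\varphi(s_{n_k})>0$ on $(-P/2,0)$, hence $\varphi(s_{n_k})\geq 0$ on $\Si_P$ by continuity and evenness; passing to the limit, $\varphi\geq 0$. The limit therefore satisfies all the hypotheses of Theorem~\ref{thm: regularity II}, giving that $\varphi$ is smooth on $(-P,0)$, that $\varphi\in C^{0,1}(\R)$, and that $\varphi$ is exactly Lipschitz at $x=0$; $P$-periodicity and evenness extend this to every $x\in P\Z$. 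Finally, the argument in the proof of Lemma~\ref{lem: global bound} — the double-symmetrisation identity \eqref{eq: double symmetrisation} together with \eqref{eq: difference of solution} and the non-negativity and non-constancy of $\varphi$ — upgrades ``non-decreasing on $(-P/2,0)$'' to ``strictly increasing on $(-P/2,0)$''.

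The crux is the first step, the bound $\sup_{s\geq 0}\mu(s)<\infty$: this is where Proposition~\ref{prop: upper bound mu}, and with it the restriction to small $P$, is unavoidable, and it relies on the global branch being made up of non-constant smooth waves lying in the cone $\mathcal{K}$. Everything afterwards is routine assembly of the compactness statement (Lemma~\ref{lem: uniform convergence}), the dichotomy (Lemma~\ref{lem: alt (i) and (ii)}), and the regularity theory of Section~\ref{sec: D-P}.
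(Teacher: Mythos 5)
Your proof is correct and follows essentially the same route as the paper: rule out alternative (iii) and reconnection to the trivial branch via Theorem~\ref{thm: alternative (iii)}, obtain $\sup_{s\geq 0}\mu(s)<\infty$ from Proposition~\ref{prop: upper bound mu}, invoke Lemma~\ref{lem: alt (i) and (ii)} and Lemma~\ref{lem: uniform convergence}, and read off the properties of the limiting wave. The paper's proof compresses the final step into ``this solution must necessarily have the stated properties,'' whereas you unpack it — verifying $\mu>\sqrt a$ via Lemma~\ref{lem: lower bound mu}, non-constancy via the dispersion relation for constants, non-negativity via Theorem~\ref{thm: strictly increasing}, regularity and exact Lipschitz behaviour via Theorem~\ref{thm: regularity II}, and strict monotonicity via the double-symmetrisation argument inside the proof of Lemma~\ref{lem: global bound} — which is exactly the chain the paper's brevity leaves implicit.
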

\begin{proof}
From Theorem \ref{thm: alternative (iii)}, we know that alternative (iii) cannot occur. The proof of Theorem \ref{thm: alternative (iii)} also implies that the curve $(\varphi(s),\mu(s))$ cannot reconnect to the curve of constant solutions we bifurcated from for any finite $s$. Hence Proposition \ref{prop: upper bound mu} implies that for all $P>0$ sufficiently small, $\sup_{s\geq 0} \mu(s)<\infty$, and by Lemma \ref{lem: alt (i) and (ii)} we get that alternatives (i) and (ii) both occur. Moreover, as $\lbrace \mu(s_n)\rbrace_n$ is bounded, Lemma \ref{lem: uniform convergence} gives that a subsequence of $\lbrace \varphi(s_n)\rbrace_n$ converges uniformly to a solution $\varphi$. As alternatives (i) and (ii) both occur, this solution must necessarily have the stated properties.
\end{proof}

\medskip
\noindent

\bibliographystyle{plain}
\bibliography{cusped_waves}

\begin{thebibliography}{10}

\bibitem{Buffoni2003ato}
B.~Buffoni and J.~Toland.
\newblock {\em Analytic Theory of Global Bifurcation}.
\newblock Princeton University Press, Princeton, New Jersey, 2003.

\bibitem{Camassa1993ais}
R.~Camassa and D.~D. Holm.
\newblock An integrable shallow water equation with peaked solitons.
\newblock {\em Phys. Rev. Lett.}, 71(11):1661--1664, 1993.

\bibitem{Constantin2009thr}
A.~Constantin and D.~Lannes.
\newblock The hydrodynamical relevance of the {C}amassa-{H}olm and
  {D}egasperis-{P}rocesi equations.
\newblock {\em Arch. Ration. Mech. Anal.}, 192(1):165--186, 2009.

\bibitem{Degasperis2002ani}
A.~Degasperis, D.~D. Holm, and A.~N.~I. Khon.
\newblock A new integrable equation with peakon solutions.
\newblock {\em Teoret. Mat. Fiz.}, 133(2):170--183, 2002.

\bibitem{Degasperis1999ai}
A.~Degasperis and M.~Procesi.
\newblock Asymptotic integrability.
\newblock In {\em Symmetry and perturbation theory ({R}ome, 1998)}, pages
  23--37. 1999.

\bibitem{Ehrnstrom2016eoa}
M.~Ehrnstr\"om, M.~A. Johnson, and K.~M. Claassen.
\newblock Existence of a highest wave in a fully dispersive two-way shallow
  water model.
\newblock arXiv:1610.02603, 2016.

\bibitem{Ehrnstrom2016owc}
M.~Ehrnstr\"om and E.~Wahl{\'e}n.
\newblock On {W}hitham's conjecture of a highest cusped wave for a nonlocal
  dispersive equation.
\newblock arXiv:1602.05384, 2016.

\bibitem{Escher2006gws}
J.~Escher, Y.~Liu, and Z.~Yin.
\newblock Global weak solutions and blow-up structure for the
  {D}egasperis-{P}rocesi equation.
\newblock {\em J. Funct. Anal.}, 241(2):457--485, 2006.

\bibitem{Kielhofer2012bta}
H.~Kielh\"ofer.
\newblock {\em Bifurcation theory}, volume 156 of {\em Applied Mathematical
  Sciences}.
\newblock Springer, New York, second edition, 2012.
\newblock An introduction with applications to partial differential equations.

\bibitem{Lenells2005tws}
J.~Lenells.
\newblock Traveling wave solutions of the {D}egasperis-{P}rocesi equation.
\newblock {\em J. Math. Anal. Appl.}, 306:72--82, 2005.

\bibitem{Whitham1967vma}
G.~B. Whitham.
\newblock Variational methods and applications to water waves.
\newblock {\em Proc. R. Soc. Lond., Ser. A}, 299:6--25, 1967.

\bibitem{Yin2003gef}
Z.~Yin.
\newblock Global existence for a new periodic integrable equation.
\newblock {\em J. Math. Anal. Appl.}, 283(1):129--139, 2003.

\bibitem{Yin2003otc}
Z.~Yin.
\newblock On the {C}auchy problem for an integrable equation with peakon
  solutions.
\newblock {\em Illinois J. Math.}, 47(3):649--666, 2003.

\bibitem{Yin2004gws}
Z.~Yin.
\newblock Global weak solutions for a new periodic integrable equation with
  peakon solutions.
\newblock {\em J. Funct. Anal.}, 212(1):182--194, 2004.

\bibitem{Zhang2007cas}
G.~Zhang and Z.~Qiao.
\newblock Cuspons and smooth solitons of the {D}egasperis-{P}rocesi equation
  under inhomogeneous boundary condition.
\newblock {\em Math. Phys. Anal. Geom.}, 10(3):205--225, 2007.

\end{thebibliography}

\end{document}